\newtheorem{theorem}{Theorem}
\newtheorem{lemma}{Lemma}
\newtheorem{remark}{Remark}
\newtheorem{corollary}{Corollary}
\newtheorem{example}{Example}
\newtheorem{assumption}{Assumption}
\title{A Spectral Contraction Framework for Periodic Solutions in Nonsmooth Dynamical Systems}
\author{ Pascal Stiefenhofer\footnote{pascal.stiefenhofer@newcastle.ac.uk}\\ 
	Newcastle University, United Kingdom
}
\date{\today}
\begin{document}
	
	\maketitle
	
\begin{abstract}
We develop a contraction-based framework to establish the existence and exponential stability of periodic solutions in planar nonsmooth dynamical systems governed by Filippov differential inclusions. The method integrates a time- and state-dependent weighted metric with Clarke's generalized Jacobian and a uniform jump condition across switching manifolds to guarantee global exponential contraction on compact, forward-invariant sets. This work generalizes classical contraction results from smooth one-dimensional systems to two-dimensional systems with discontinuities and sliding behavior. A fixed-point argument ensures the existence and uniqueness of an attracting periodic orbit. The framework offers a robust analytic tool for stability analysis in piecewise-smooth systems, with applications in hybrid control, nonsmooth mechanics, and computational dynamics.
\end{abstract}

\textbf{Keywords:} Filippov differential inclusions; nonsmooth dynamical systems; periodic solutions; exponential stability; contraction theory; sliding modes; Clarke generalized Jacobian; discontinuous vector fields;

\section{Introduction}

The study of nonsmooth dynamical systems, particularly those governed by differential equations with discontinuous right-hand sides, has emerged as a central topic in the mathematical modeling of real-world phenomena. These systems naturally arise in mechanics with impacts and friction~\cite{dibernardo2008,kunze2000}, in hybrid and switched control systems~\cite{pavlov2004}, and in economic models characterized by regime-switching behavior~\cite{ito1979,stiefenhofer2019a,stiefenhofer2019b,stiefenhofer2019c,stiefenhofer2019d,stiefenhofer2020a,stiefenhofer2020b,stiefenhofer2020c}. Discontinuities along codimension-one manifolds give rise to complex behaviors such as sliding modes, chattering, and solution nonuniqueness~\cite{dibernardo2008}. Such features invalidate many classical techniques of smooth dynamical systems theory, including those based on Lyapunov functions, Poincar\'e maps, and invariant manifold constructions~\cite{filippov1988,kunze2001}.

Filippov's foundational framework~\cite{filippov1988} recasts such systems as differential inclusions, wherein solutions are defined via a set-valued extension of the vector field. This approach ensures the existence of solutions and, under appropriate assumptions, uniqueness. However, a general theory for analyzing the stability and long-term behavior of periodic solutions within this nonsmooth context remains incomplete.

For scalar time-periodic Filippov systems, Giesl~\cite{giesl2005} introduced a contraction-based stability criterion, employing an orbital derivative inequality for a piecewise-$C^1$ weight function $W(t,x)$. This result was later shown to be not only sufficient but also necessary~\cite{giesl2007}, yielding a complete characterization of exponential stability in one dimension. These ideas have recently been further developed by Boote, Giesl, and Suhr~\cite{boote2025}, who numerically examined a nonsmooth time-periodic ODE in $\mathbb{R}$ and highlighted the tractability of the contraction approach in low-dimensional settings.

However, developing a contraction-based theory for higher-dimensional nonsmooth systems remains a nontrivial and open challenge~\cite{boote2025}. In the scalar case, the total order structure of \( \mathbb{R} \) enables powerful monotonicity arguments that are fundamentally unavailable in dimensions \( n \geq 2 \). In \( \mathbb{R}^n \), the absence of such order, coupled with the geometric complexity of switching manifolds and the possibility of sliding dynamics, significantly complicates both local and global stability analysis. Moreover, classical differentiability breaks down at points of discontinuity, necessitating tools from nonsmooth analysis, such as Clarke's generalized Jacobian~\cite{clarke1983}, to capture directional behavior and establish meaningful contraction conditions.

In the smooth setting, contraction theory has proven to be a robust framework for analyzing stability through infinitesimal metric conditions. Originally introduced by Lohmiller and Slotine~\cite{lohmiller2000} and subsequently generalized in various directions~\cite{bullo2023}, this theory investigates the exponential convergence of trajectories by examining the symmetric part of the Jacobian
\[
S_f(t,x) := \frac{1}{2} \left( D_x f(t,x) + D_x f(t,x)^\top \right).
\]
Under appropriate structural assumptions, contraction metrics imply global exponential convergence toward attractors, including periodic orbits~\cite{giesl2023, sontag2010}. Moreover, recent computational advances have enabled the constructive synthesis of such metrics~\cite{giesl2021}.

Despite this progress, a general theory of contraction for nonsmooth systems remains lacking. Piecewise-smooth dynamics necessitate the treatment of multiple vector fields, discontinuous transitions, and sliding motions. On a switching manifold $\Sigma := \{ x \in \mathbb{R}^n : h(x) = 0 \}$, one must consider the induced sliding vector field $f_{\mathrm{slide}}(t,x)$ and ensure contraction conditions not only within smooth domains but also across $\Sigma$. Furthermore, contraction across discontinuities requires a jump condition on the weighted flow magnitudes:
\[
e^{W^+(t,x)} \|f^+(t,x)\| \leq e^{W^-(t,x)} \|f^-(t,x)\|.
\]

In this article, we develop a framework for analyzing the exponential stability of time-periodic solutions in nonsmooth dynamical systems described by Filippov differential inclusions. Central to our approach is the construction of a state- and time-dependent weight function $W(t,x)$ that captures the discontinuous geometry of the system. We formulate spectral contraction conditions using Clarke's generalized Jacobian and provide sufficient criteria for contraction within both regular and sliding dynamics. Additionally, we derive a generalized jump condition necessary to ensure contraction across switching interfaces. Combining these elements, we invoke a Banach fixed-point argument to establish the existence and uniqueness of an exponentially attracting periodic orbit. Our results offer a first step toward a comprehensive contraction-based theory for planar nonsmooth systems and contribute to the broader mathematical understanding of stability in discontinuous dynamical systems.

The remainder of this paper is structured as follows. Section~\ref{sec:framework} introduces the contraction framework for planar Filippov systems and presents the main stability theorem. Section~\ref{sec:application} applies the theory to a two-dimensional nonsmooth periodic system. Section~\ref{sec:conclusion} concludes with a summary and perspectives for future research. Technical lemmas and supplementary material are deferred to the appendix in Section~\ref{secx:appendix}.

\section{Spectral Contraction Framework and Existence Theorem for Nonsmooth Periodic Orbits}
\label{sec:framework}

In this section, we introduce the mathematical framework for analyzing contraction properties of planar Filippov systems with time-periodic dynamics. We define the key assumptions on the vector field and the weighted contraction metric, and then state our main theorem establishing existence and exponential stability of a unique periodic orbit under these conditions.
	
\begin{assumption}[Filippov System Setup]\label{assump:filippov_system_setup}
	
	Let $f: \mathbb{R} \times \mathbb{R}^2 \to \mathbb{R}^2$ be a piecewise-$C^1$ vector field with a codimension-1 switching manifold $\Sigma := \{ x \in \mathbb{R}^2 : h(x) = 0 \}$, where $h$ is $C^1$ and $\nabla h(x) \neq 0$ for all $x \in \Sigma$. The Filippov set-valued map is defined by
		\begin{equation} \label{eq:filippov_map}
			F(t,x) := \text{conv}\left\{ \lim_{\varepsilon \to 0^+} f(t, x \pm \varepsilon n(x)) \right\}, \quad n(x) := \frac{\nabla h(x)}{\|\nabla h(x)\|}.
		\end{equation}
Filippov solutions $x(t)$ satisfy $\dot{x}(t) \in F(t,x(t))$ almost everywhere.
\end{assumption}
	
\

\begin{assumption}[Structure and Contraction Conditions]\label{assump:structure_contraction}
	
Let $K \subset \mathbb{T} \times \mathbb{R}^2$ be a compact, forward-invariant set.
		\begin{enumerate}
			\item[(A1)] $f$ is $T$-periodic in $t$ and piecewise-$C^1$ in $x$ away from $\Sigma$.
			
			\item[(A2)] There exists a continuous, piecewise-$C^1$ scalar function $W: \mathbb{T} \times \mathbb{R}^2 \to \mathbb{R}$ such that for all $(t,x) \in K \setminus \Sigma$:
			\begin{equation} \label{eq:contraction_smooth}
				W'(t,x) + \lambda_{\max}^{Cl}(t,x) \leq -\nu < 0,
			\end{equation}
			where $W'$ is the orbital derivative of $W$, and $\lambda_{\max}^{Cl}(t,x)$ is the maximal eigenvalue of the symmetric part of any element of the Clarke generalized Jacobian $\partial_C f(t,x)$.
			
		\item[(A3)] There exists a constant \( \epsilon > 0 \) such that for all \( (t, x), (t, y) \in K \cap \Sigma \), and for all \( v_x \in F(t, x) \), \( v_y \in F(t, y) \), the directional difference satisfies:
		\begin{equation} \label{eq:jump_directional_contraction}
			\left\langle \frac{x - y}{\|x - y\|}, v_x - v_y \right\rangle \leq -(\nu + \epsilon),
		\end{equation}
		where \( \nu > 0 \) is the contraction rate from Assumption~(A2), and \( F(t,x) \) denotes the Filippov set-valued map.
			
			\item[(A4)] In sliding regions, the sliding vector field $f_{\text{slide}}(t,x)$ is well-defined and piecewise-$C^1$, and:
			\begin{equation} \label{eq:sliding_contraction}
				W'(t,x) + \lambda_{\max}^{Cl}(t,x; f_{\text{slide}}) \leq -\nu < 0.
			\end{equation}
		\end{enumerate}
	\end{assumption}

\

Lemma~\ref{lem:comparison_jumps} provides a comparison principle for functions that are subject to both continuous exponential decay and instantaneous contraction jumps. The function \(A\) models a quantity that decreases continuously at a rate at least \(\nu > 0\) between finitely many discontinuities, where it experiences additional multiplicative contractions by factors \(\gamma_k \in (0,1)\). This lemma formalizes how the combined effect of continuous decay and discrete contractions results in an overall exponential decay governed by both the differential decay rate and the product of jump contraction factors. It is especially useful in the analysis of nonsmooth dynamical systems, such as Filippov systems, where trajectories can experience instantaneous changes at switching manifolds. In the context of this paper, the lemma underpins the global contraction estimate by enabling the passage from local (continuous) contraction and discrete jump contraction conditions to a uniform, global exponential bound on the weighted distance between trajectories.

\

\begin{lemma}\label{lem:comparison_jumps}
	Let \(A: [0,T] \to [0,\infty)\) be a piecewise absolutely continuous function with at most countably many jump times \(\{t_k\}\), satisfying:
	\begin{enumerate}
		\item On each open interval between jumps, \(A\) is absolutely continuous and satisfies the differential inequality
		\[
		\frac{d}{dt} A(t) \le -\nu A(t), \quad \text{for a.e. } t, \quad \nu > 0.
		\]
		\item At each jump time \(t_k\), the function satisfies the multiplicative inequality
		\[
		A(t_k^+) \le \gamma_k A(t_k^-), \quad \text{with } 0 < \gamma_k \le 1.
		\]
	\end{enumerate}
	Then for all \(t \in [0,T]\),
	\[
	A(t) \le A(0)\, e^{-\nu t} \prod_{k: t_k \le t} \gamma_k.
	\]
\end{lemma}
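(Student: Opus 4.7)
The plan is to reduce the statement to monotonicity of a single auxiliary function by absorbing the continuous decay into an integrating factor, and then to treat the multiplicative jumps by telescoping. Concretely, I would introduce
\[
\Phi(t) := e^{\nu t} A(t),
\]
and verify two facts: first, on each open inter-jump interval \((t_{k-1}, t_k)\), \(\Phi\) is absolutely continuous and non-increasing; second, at each jump time \(t_k\), \(\Phi(t_k^+) \le \gamma_k \Phi(t_k^-)\).

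For the first fact, I would compute \(\Phi'(t) = e^{\nu t}\bigl(A'(t) + \nu A(t)\bigr)\) almost everywhere on an inter-jump interval, and use hypothesis (1) to bound this by zero a.e.; since \(\Phi\) is absolutely continuous there, integrating gives \(\Phi(s) \le \Phi(r)\) for \(r \le s\) in the same interval. For the second fact, I would multiply hypothesis (2) by \(e^{\nu t_k}\). Combining these, for any \(t \in [0,T]\) with finitely many jump times \(t_1 < \cdots < t_N \le t\), I would telescope:
\[
\Phi(t) \le \Phi(t_N^+) \le \gamma_N \Phi(t_N^-) \le \gamma_N \Phi(t_{N-1}^+) \le \cdots \le \Phi(0) \prod_{k=1}^{N} \gamma_k,
\]
and then unpack the definition of \(\Phi\) to conclude \(A(t) \le A(0)\, e^{-\nu t}\prod_{k: t_k \le t}\gamma_k\).

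The one genuine subtlety is the case of countably infinitely many jump times in \([0,t]\), which can accumulate. I would handle this by an approximation: for each \(n\), consider only the first \(n\) jumps \(t_{k_1} < \cdots < t_{k_n}\) in \([0,t]\) and apply the finite telescoping argument on the inter-jump skeleton, then observe that since every \(\gamma_k \in (0,1]\), the partial products \(\prod_{k: t_k \le t, k \le n}\gamma_k\) form a monotone non-increasing sequence bounded below by \(0\) and therefore converge; similarly \(\Phi\) restricted to the union of inter-jump intervals is non-increasing, so taking the monotone limit across the closure of the jump set preserves the inequality. Here I would need to use right-continuity (or the one-sided limit convention implicit in \(A(t_k^\pm)\)) to justify the limit at accumulation points, which is the main technical care-point; once that is in place the bound follows verbatim.
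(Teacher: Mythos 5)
Your plan is correct and follows essentially the same route as the paper: the integrating factor \(\Phi(t)=e^{\nu t}A(t)\) is just Gr\"onwall's inequality on each inter-jump interval, combined with the multiplicative jump bound and a finite telescoping/induction over the jump times. The only difference is that you sketch a limiting argument for accumulating jump times, whereas the paper's proof simply observes that piecewise absolute continuity forces the jump times to have no finite accumulation point, so only finitely many jumps occur on \([0,t]\) and the finite telescoping suffices.
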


\

\begin{proof}
Let \( t \in [0,T] \), and denote the (at most countable) set of jump times of \( A(t) \) by \( \{t_k\}_{k \in \mathbb{N}} \), with \( 0 < t_1 < t_2 < \dots \), having no finite accumulation point. Since \( A \) is piecewise absolutely continuous, only finitely many jumps occur on any compact interval \( [0,t] \). Fix \( t \in [0,T] \), and let \( t \in [t_k, t_{k+1}) \) for some \( k \geq 0 \) (with \( t_0 := 0 \)). On each open subinterval \( (t_j, t_{j+1}) \), the function \( A \) is absolutely continuous and satisfies
\[
\frac{d}{dt} A(t) \le -\nu A(t) \quad \text{for a.e. } t \in (t_j, t_{j+1}).
\]
By Grönwall’s inequality, integrating from \( t_j^+ \) to any \( t \in [t_j, t_{j+1}) \) yields
\[
A(t) \le A(t_j^+) e^{-\nu(t - t_j)}.
\]

At each jump time \( t_j \), we have:
\[
A(t_j^+) \le \gamma_j A(t_j^-).
\]

Combining these step by step from \( t_0 = 0 \) to the current \( t \), we inductively obtain:
\[
A(t) \le A(0)\, e^{-\nu t} \prod_{j: t_j \le t} \gamma_j,
\]
which completes the proof.
\end{proof}

\

The following theorem establishes the existence and exponential stability of a unique periodic orbit for the Filippov system under the structural and contraction conditions specified in Assumptions~\ref{assump:filippov_system_setup} and \ref{assump:structure_contraction}. It guarantees that all trajectories starting within the compact, forward-invariant set \(K\) converge exponentially to this periodic orbit, providing conditions for stability analysis in nonsmooth, time-periodic planar systems.

\

\begin{theorem}[Existence and Stability of a Periodic Orbit]\label{thm:existence_stability}
Suppose Assumptions 1 and 2 hold. Then the Filippov system admits a unique $T$-periodic orbit $\Omega \subset K$ such that every Filippov solution $x(t)$ with $x(0) \in K$ converges exponentially to $\Omega$. Specifically, there exist constants $C > 0$ and $\nu > 0$ such that for all $x_0, y_0 \in K$,
		\begin{equation} \label{eq:euclidean_contraction}
			\|\phi(t,x_0) - \phi(t,y_0)\| \leq C e^{-\nu t} \|x_0 - y_0\|.
		\end{equation}
	\end{theorem}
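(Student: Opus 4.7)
The plan is to combine a weighted contraction estimate for two Filippov solutions with the jump-comparison Lemma~\ref{lem:comparison_jumps}, and then extract the periodic orbit as the unique fixed point of an iterate of the period-$T$ Poincar\'e map on the compact forward-invariant time-zero slice $K_0 := \{x_0 \in \mathbb{R}^2 : (0, x_0) \in K\}$. Throughout, I would fix two Filippov solutions $x(t) := \phi(t, x_0)$ and $y(t) := \phi(t, y_0)$ with $x_0, y_0 \in K_0$ and study the weighted distance
\[
A(t) := e^{\,\tfrac{1}{2}(W(t, x(t)) + W(t, y(t)))}\, \|x(t) - y(t)\|.
\]
Symmetrizing the weight across the two trajectories is cosmetic --- since $W$ is continuous and $K$ is compact, all estimates transfer back to the Euclidean norm up to a uniform multiplicative constant --- but it simplifies the bookkeeping when only one of the two trajectories lies on $\Sigma$ at a given time.

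On any open subinterval where both trajectories remain in the same smooth mode and away from $\Sigma$, a mean-value representation through Clarke's generalized Jacobian gives $\langle x - y,\, f(t,x) - f(t,y)\rangle \le \lambda_{\max}^{Cl}(t, \xi)\, \|x - y\|^2$ for some $\xi$ on the segment $[x(t), y(t)]$, and differentiating $A$ together with assumption (A2) produces the differential inequality $\dot A(t) \le -\nu A(t)$; on sliding subintervals the identical estimate follows from (A4) applied to $f_{\mathrm{slide}}$. At each time $t_k$ at which either trajectory meets or leaves $\Sigma$, assumption (A3) bounds the one-sided directional derivative of $\|x - y\|$ by $-(\nu + \epsilon)$, and I would translate this into a multiplicative jump inequality $A(t_k^+) \le \gamma_k A(t_k^-)$ with $\gamma_k \in (0, 1]$; the strict excess $\epsilon > 0$ ensures $\gamma_k < 1$ whenever a genuine transversal or sliding transition occurs.

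Equipped with both the continuous-decay and jump inequalities, Lemma~\ref{lem:comparison_jumps} delivers $A(t) \le A(0)\, e^{-\nu t}$ for all $t \ge 0$, and passing back from $A$ to the Euclidean norm via uniform bounds on $e^{W}$ over $K$ yields the estimate~\eqref{eq:euclidean_contraction} with $C := (\sup_K e^{W}) / (\inf_K e^{W})$. Forward invariance of $K$ makes the Poincar\'e map $\Pi : K_0 \to K_0$, $\Pi(x_0) := \phi(T, x_0)$, well-defined, and~\eqref{eq:euclidean_contraction} evaluated at $t = NT$ shows that a sufficiently large iterate $\Pi^N$ is a strict Banach contraction on the complete metric space $K_0$. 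The Banach fixed-point theorem then produces a unique $x^\ast \in K_0$, whose trajectory $\Omega := \{\phi(t, x^\ast) : t \in [0, T]\}$ is the claimed unique $T$-periodic orbit in $K$, and exponential attraction of every Filippov solution to $\Omega$ follows by taking $y_0 = x^\ast$ in~\eqref{eq:euclidean_contraction}.

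The main obstacle is the switching analysis at $\Sigma$. Generically the two trajectories reach the switching manifold at different instants, so on short transition intervals one trajectory is still governed by $f^-$ while the other already evolves under $f^+$ or by $f_{\mathrm{slide}}$, and $\|x(t) - y(t)\|$ is neither smooth nor guaranteed monotone there. The delicate point is to verify that (A3), which is formulated symmetrically for pairs $(x, y) \in K \cap \Sigma$, genuinely controls this mismatch --- say by a limiting argument exploiting the set-valued nature of $\partial_C f$ together with the uniform separation $\nu + \epsilon > \nu$ --- and that only finitely many such transitions can occur on $[0, T]$ within $K$, so that grazing accumulation of switching times is precluded and Lemma~\ref{lem:comparison_jumps} is applicable. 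Once those technicalities are resolved, the rest of the argument is a standard synthesis of weighted contraction analysis, a comparison principle, and Banach's fixed-point theorem.
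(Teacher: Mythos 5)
Your proposal follows essentially the same route as the paper's proof: a weighted distance $A(t)$, continuous decay $\frac{d}{dt}A \le -\nu A$ in the smooth and sliding regimes via a Clarke mean-value representation together with (A2)/(A4), a multiplicative jump inequality at switching times derived from (A3), the comparison Lemma~\ref{lem:comparison_jumps}, uniform equivalence of the weighted and Euclidean norms on the compact set $K$, and a Banach fixed-point argument for the time-$T$ map (your symmetrized weight is, as you say, cosmetic). Two differences are worth recording. First, your fixed-point step is more careful than the paper's: the paper applies Banach directly to $\phi_T$ and asserts $q = C e^{-\nu T} < 1$ merely because $C$ is finite, which does not follow from the hypotheses (with $C = e^{2M}$ one would need $2M < \nu T$); your device of contracting a sufficiently high iterate $\Pi^N$, whose unique fixed point is then automatically fixed by $\Pi$ itself, removes this unjustified claim. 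Second, the difficulty you flag is genuine and is not resolved by the paper either: (A3) is stated for pairs of points lying on $\Sigma$ at the same time, whereas generic nearby trajectories reach $\Sigma$ at different instants, and one must also rule out accumulation of switching times for Lemma~\ref{lem:comparison_jumps} to apply. The paper's Step 2 treats only a simultaneous crossing at a common time $t_0$, imports the example-specific weight $W = -\delta\,\sigma(x_1)$ and the vector field $-\mu x_1 \pm \sin t$ into what should be a general argument, and its Step 4 invokes an ``Assumption (A5)'' (a minimum dwell time between switchings) that is never stated in Assumption~\ref{assump:structure_contraction}. So your outline matches the paper in substance, is more explicit about where the switching analysis still needs work, and repairs the one concrete flaw in the paper's Step 5.
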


\

\begin{remark}[Basin of Attraction]\label{rem:basin_of_attraction}
By Theorem~\ref{thm:existence_stability}, the compact set \(K \subset \mathbb{T} \times \mathbb{R}^2\) is forward invariant under the Filippov flow and contains the unique exponentially stable periodic orbit \(\Omega\). Since for every initial condition \(x_0 \in K\), the corresponding Filippov solution \(\phi(t, x_0)\) satisfies the exponential contraction estimate
	\[
	\|\phi(t, x_0) - \phi(t, x^*)\| \leq C e^{-\nu t} \|x_0 - x^*\|,
	\]
it follows that all such trajectories converge exponentially to \(\Omega\). Therefore, the set \(K\) is contained within the basin of attraction of \(\Omega\). A complete characterization or enlargement of the basin of attraction beyond the set \(K\) involves intricate global properties of the Filippov system, such as invariance of larger domains and exclusion of other attractors. Such an analysis typically requires tools beyond local contraction estimates and is therefore beyond the scope of this work.
\end{remark}

\

\begin{proof}
The proof proceeds in five steps.

\subsubsection*{Step 1: Contraction in Smooth Regions.} Let $x(t), y(t): [0,T] \to \mathbb{R}^2$ be two Filippov solutions with $(t,x(t)), (t,y(t)) \in K \setminus \Sigma$. Let $\delta(t) := x(t) - y(t)$. Define the weighted distance
\begin{equation} \label{eq:weighted_distance}
	A(t) := e^{W(t,x(t))} \|\delta(t)\|.
\end{equation}
Applying the product rule and chain rule for differentiable functions, we get
\begin{align}
	\frac{d}{dt} A(t) &= e^{W(t,x(t))} \left( W'(t,x(t)) \|\delta(t)\| + \frac{d}{dt} \|\delta(t)\| \right), \label{eq:A_derivative}
\end{align}
where
\begin{equation} \label{eq:orbital_derivative}
	W'(t,x(t)) := \partial_t W(t,x(t)) + \nabla_x W(t,x(t))^\top \dot{x}(t).
\end{equation}
Define $v(t) := \delta(t)/\|\delta(t)\|$ for $\delta(t) \neq 0$. Since $x(t)$ and $y(t)$ are absolutely continuous, $\delta(t)$ is also absolutely continuous and differentiable almost everywhere. Therefore,
\begin{equation} \label{eq:delta_derivative}
	\frac{d}{dt} \|\delta(t)\| = \left\langle v(t), \dot{x}(t) - \dot{y}(t) \right\rangle.
\end{equation}
By the integral mean value theorem for vector fields (justified by the $C^1$ regularity of $f$ in $x$), we can write
\begin{equation} \label{eq:mean_value_integral}
	\dot{x}(t) - \dot{y}(t) = \int_0^1 D_x f(t, y(t) + s \delta(t)) \, ds \cdot \delta(t).
\end{equation}
Let $\xi(t)$ be some point in the segment connecting $x(t)$ and $y(t)$. Define the symmetric part of the Jacobian
\begin{equation} \label{eq:symmetric_part}
	S(t,\xi(t)) := \frac{1}{2}(D_x f(t,\xi(t)) + D_x f(t,\xi(t))^\top).
\end{equation}
Then,
\begin{align}
	\frac{d}{dt} \|\delta(t)\| &= v(t)^\top D_x f(t, \xi(t)) \delta(t) \\
	&= v(t)^\top S(t, \xi(t)) \delta(t) \leq \lambda_{\max}(S(t, \xi(t))) \|\delta(t)\|. \label{eq:delta_bound}
\end{align}
We now differentiate the weighted distance \( A(t) := e^{W(t,x(t))} \|\delta(t)\| \) using the product rule:
\begin{align}
	\frac{d}{dt} A(t) &= \frac{d}{dt} \left( e^{W(t,x(t))} \right) \|\delta(t)\| + e^{W(t,x(t))} \frac{d}{dt} \|\delta(t)\| \notag \\
	&= e^{W(t,x(t))} \left( W'(t,x(t)) \|\delta(t)\| + \frac{d}{dt} \|\delta(t)\| \right), \label{eq:A_derivative_expanded}
\end{align}
where the orbital derivative of \( W \) along the trajectory is defined as
\[
W'(t,x(t)) := \partial_t W(t,x(t)) + \nabla_x W(t,x(t))^\top \dot{x}(t).
\]
Using the bound derived earlier for the derivative of \( \|\delta(t)\| \) (Equation~\eqref{eq:delta_bound}),
\[
\frac{d}{dt} \|\delta(t)\| \leq \lambda_{\max}(S(t,\xi(t))) \|\delta(t)\|,
\]
we substitute this into \eqref{eq:A_derivative_expanded} to obtain
\begin{align}
	\frac{d}{dt} A(t) &\leq e^{W(t,x(t))} \left( W'(t,x(t)) + \lambda_{\max}(S(t,\xi(t))) \right) \|\delta(t)\| \notag \\
	&= \left( W'(t,x(t)) + \lambda_{\max}(S(t,\xi(t))) \right) A(t), \label{eq:A_inequality}
\end{align}
which proves the differential inequality \eqref{eq:A_inequality}. Finally, by continuity and compactness of $K$, and by Assumption~\eqref{eq:contraction_smooth}, we have
\begin{equation} \label{eq:final_decay}
	\frac{d}{dt} A(t) \leq -\nu A(t),
\end{equation}
as required.

\subsubsection*{Step 2: Contraction Across Discontinuities}

We analyze the behavior of the weighted distance function
\begin{equation} \label{eq:weighted_distance_definition}
	A(t) = e^{W(t,x(t))} \|x(t) - y(t)\|,
\end{equation}
as two Filippov solutions \( x(t), y(t) \) cross the switching manifold \( \Sigma = \{x_1 = 0\} \),
which is guaranteed by the continuity of Filippov trajectories. 

Due to Assumption~(A1), the weight function \( W(t,x) \) is piecewise continuously differentiable and continuous across \( \Sigma \), even if it may be defined via different expressions on either side. Therefore, the weighted distance function \( A(t) \) is continuous at switching times:
\begin{equation} \label{eq:continuity_switching}
	\lim_{t \nearrow t_0} A(t) = \lim_{t \searrow t_0} A(t) = A(t_0).
\end{equation}

To ensure contraction at the discontinuity, we impose the following inequality on the one-sided limits of the weight function, consistent with Assumption~(A3):
\begin{equation} \label{eq:jump_contraction}
	\exp(W^+(t,x)) \|f^+(t,x)\| \le \exp(W^-(t,x)) \|f^-(t,x)\|, \quad \text{for all } x \in \Sigma,\, t \in [0,T],
\end{equation}
where \( W^\pm(t,x) \) denote the one-sided spatial limits of the weight function \( W(t,x) \) as \( x_1 \to 0^\pm \), taken from the domains of \( f^\pm \). This condition ensures that the weighted norm of the vector field does not increase across the discontinuity, thereby enforcing a contraction in the weighted distance.

More precisely, let \( x(t), y(t) \) be Filippov solutions that cross the switching manifold \( \Sigma \) at time \( t_0 \). Then, by continuity of \( W \), the displacement \( \delta(t) = x(t) - y(t) \) satisfies the identity~\eqref{eq:continuity_switching}, and the weighted distance is defined by~\eqref{eq:weighted_distance_definition}.

To analyze the instantaneous change in \( A(t) \) at \( t_0 \), we compute its upper right Dini derivative:
\begin{align}
	\frac{d}{dt} A(t) &= \frac{d}{dt} \left[ e^{W(t,x(t))} \|x(t) - y(t)\| \right] \notag \\
	&= e^{W(t,x)} \left( \nabla W(t,x) \cdot \dot{x}(t) \cdot \|\delta(t)\| + \left\langle \frac{\delta(t)}{\|\delta(t)\|}, \dot{x}(t) - \dot{y}(t) \right\rangle \right).
	\label{eq:derivative_weighted_distance}
\end{align}

The first term captures the contribution from the time-dependent weight, while the second quantifies the directional separation between trajectories. By Assumption~(A3), for all admissible selections \( \dot{x}(t) \in F(t,x(t)) \), \( \dot{y}(t) \in F(t,y(t)) \), we have
\begin{equation} \label{eq:directional_contraction}
	\left\langle \frac{\delta(t)}{\|\delta(t)\|}, \dot{x}(t) - \dot{y}(t) \right\rangle \le -\nu.
\end{equation}

We now estimate the gradient term using the assumed structure of the weight function. Suppose \( W(t,x) = -\delta \cdot \sigma(x_1) \), where \( \sigma: \mathbb{R} \to [-1,1] \) is a smooth, monotonic transition function satisfying \( \sigma(x_1) \to \pm 1 \) as \( x_1 \to \pm \infty \). Then:
\begin{equation} \label{eq:weight_gradient}
	\nabla W(t,x) = -\delta \cdot \sigma'(x_1) \hat{e}_1,
\end{equation}
and the orbital derivative becomes:
\begin{equation} \label{eq:orbital_derivative}
	W'(t,x) = \nabla W(t,x) \cdot f(t,x) = -\delta \cdot \sigma'(x_1)(-\mu x_1 \pm \sin t).
\end{equation}

Here, \( \hat{e}_1 = [1 \; 0]^T \) denotes the unit vector in the \( x_1 \)-direction, and \( \sigma'(x_1) \) is supported only in a narrow transition layer \( |x_1| < \varepsilon \), with \( \|\sigma'\|_\infty \le C_\sigma \). Thus, the orbital derivative is bounded by:
\begin{equation} \label{eq:orbital_bound}
	|W'(t,x)| \le \delta C_\sigma (\mu \varepsilon + 1),
\end{equation}
which implies that the gradient term satisfies:
\begin{equation} \label{eq:gradW_bound}
	\nabla W(t,x) \cdot \dot{x}(t) \le \epsilon,
\end{equation}
for some small \( \epsilon > 0 \), provided \( \delta \) and \( \varepsilon \) are chosen sufficiently small.

Combining~\eqref{eq:derivative_weighted_distance}, \eqref{eq:directional_contraction}, and \eqref{eq:gradW_bound}, we obtain:
\begin{equation} \label{eq:contraction_final}
	\frac{d}{dt} A(t) \le e^{W(t,x)} (\epsilon - \nu) \|\delta(t)\| = - (\nu - \epsilon) A(t),
\end{equation}
and hence,
\begin{equation} \label{eq:Dini_contraction}
	D^+ A(t_0) \le - (\nu - \epsilon) A(t_0),
\end{equation}
which shows that the weighted distance decreases across the switching manifold, provided \( \epsilon < \nu \). This completes the proof of contraction at nonsmooth transitions.

\paragraph{Remark.} In the regularized case where \( W(t,x) \) is defined via a smooth transition function \( \sigma(x_1) \), the function \( W \) is globally \( C^1 \) across \( \Sigma \), and the inequality~\eqref{eq:jump_contraction} is satisfied approximately, with equality in the limit \( \delta, \varepsilon \to 0 \). This smoothing enables a unified treatment of the weighted distance, while still preserving contraction across the discontinuity.

\subsubsection*{Step 3: Contraction in Sliding Regions}

Let \( \Sigma_s \subset \Sigma \) denote a sliding region where the Filippov sliding vector field
\begin{equation} \label{eq:f_slide_def}
	f_{\mathrm{slide}} : \mathbb{T} \times \Sigma_s \to T\Sigma_s \subset \mathbb{R}^2
\end{equation}
is well-defined and satisfies Assumption~(A4).

\paragraph{Sliding vector field: definition and regularity}

Let \( \Sigma = \{ x \in \mathbb{R}^2 : h(x) = 0 \} \), where \( h : \mathbb{R}^2 \to \mathbb{R} \) is a \( C^1 \) function with \( \nabla h(x) \neq 0 \) for all \( x \in \Sigma \), ensuring that \( \Sigma \) is a smooth codimension-one manifold. The Filippov sliding vector field \( f_{\mathrm{slide}} \colon \mathbb{T} \times \Sigma_s \to T\Sigma \subset \mathbb{R}^2 \) is defined via Filippov’s convex method (see \cite{filippov1988}) as the unique convex combination of the vector fields on either side of \( \Sigma \) that satisfies the tangency condition
\begin{equation} \label{eq:sliding_tangency}
	\langle \nabla h(x), f_{\mathrm{slide}}(t,x) \rangle = 0, \quad \forall (t,x) \in \mathbb{T} \times \Sigma_s.
\end{equation}
Under Assumption~(A4), the vector field \( f_{\mathrm{slide}} \) is piecewise \( C^1 \) with respect to \( x \) and continuous with respect to \( t \) on \( \mathbb{T} \times \Sigma_s \). Consequently, by Rademacher’s theorem, \( f_{\mathrm{slide}}(t, \cdot) \) is differentiable almost everywhere in \( x \), for each fixed \( t \in \mathbb{T} \). This justifies the use of generalized derivatives when estimating the variation of solutions along sliding motions.

The sliding vector field \( f_{\mathrm{slide}} : \mathbb{T} \times \Sigma_s \to T\Sigma \subset \mathbb{R}^2 \) is defined as the unique convex combination of the vector fields on either side of \( \Sigma \) that lies tangent to the surface. That is,
\[
f_{\mathrm{slide}}(t,x) := (1 - \alpha(x)) f^-(t,x) + \alpha(x) f^+(t,x),
\]
where \( \alpha(x) \in [0,1] \) is chosen so that the tangency condition
\[
\langle \nabla h(x), f_{\mathrm{slide}}(t,x) \rangle = 0
\]
is satisfied for all \( (t,x) \in \mathbb{T} \times \Sigma_s \). This ensures that Filippov trajectories remain confined to the sliding region \( \Sigma_s \) during sliding motion.

\paragraph{Weighted distance function}

Let \( x(t), y(t) : [0,T] \to \Sigma_s \) be two Filippov solutions evolving entirely within the sliding region \( \Sigma_s \). Define the weighted distance
\begin{equation} \label{eq:sliding_weighted_distance}
	A(t) := e^{W(t,x(t))} \|x(t) - y(t)\|,
\end{equation}
where \( W : \mathbb{T} \times \Sigma_s \to \mathbb{R} \) is continuous and piecewise \( C^1 \), as specified in Assumption~(A4). Since \( x(t) \) and \( y(t) \) are absolutely continuous and \( W(t,x) \) is piecewise differentiable in \( x \) and continuous in \( t \), the composition \( t \mapsto A(t) \) is absolutely continuous on any subinterval of \( [0,T] \) where both trajectories remain in \( \Sigma_s \). In particular, \( A(t) \) is differentiable almost everywhere on such intervals.

\paragraph{Derivative computation and linearization}

By the product and chain rules, for almost every \( t \) such that \( x(t), y(t) \in \Sigma_s \) and \( \delta(t) := x(t) - y(t) \neq 0 \), we have:
\begin{equation} \label{eq:A_derivative_sliding}
	\frac{d}{dt} A(t) = e^{W(t,x(t))} \left( W'(t,x(t)) \| \delta(t) \| + \frac{d}{dt} \| \delta(t) \| \right),
\end{equation}
where the total derivative of the weight function is given by
\begin{equation} \label{eq:W_prime_sliding}
	W'(t,x(t)) := \partial_t W(t,x(t)) + \nabla_x W(t,x(t))^\top f_{\mathrm{slide}}(t,x(t)).
\end{equation}

\paragraph{Application of the nonsmooth mean value theorem}

Since \( f_{\mathrm{slide}}(t,\cdot) \) is locally Lipschitz and piecewise \( C^1 \) by Assumption~(A4), Clarke's nonsmooth mean value theorem applies to the displacement \( \delta(t) = x(t) - y(t) \). Specifically, for each fixed \( t \in \mathbb{T} \), there exists a point \( \xi(t) \in [x(t), y(t)] \) on the line segment joining \( x(t) \) and \( y(t) \), and a matrix \( J(t, \xi(t)) \in \partial_C f_{\mathrm{slide}}(t, \xi(t)) \) such that
\begin{equation} \label{eq:nonsmooth_mvt}
	f_{\mathrm{slide}}(t, x(t)) - f_{\mathrm{slide}}(t, y(t)) = J(t, \xi(t)) \delta(t).
\end{equation}
This follows from Clarke’s mean value theorem, which states that for any locally Lipschitz function, the difference between two evaluations can be expressed via a matrix from the Clarke generalized Jacobian at an intermediate point. Although \( f_{\mathrm{slide}}(t,\cdot) \) may not be differentiable everywhere due to the nonsmoothness at \( \Sigma \), it admits a generalized Jacobian almost everywhere and remains directionally differentiable throughout \( \Sigma_s \). Equation~\eqref{eq:nonsmooth_mvt} thus provides a linear representation of the relative displacement dynamics using a generalized Jacobian. As a result, the evolution of \( \delta(t) \) satisfies the differential equation
\begin{equation} \label{eq:delta_dot_sliding}
	\frac{d}{dt} \delta(t) = J(t, \xi(t)) \delta(t), \quad \text{with } J(t, \xi(t)) \in \partial_C f_{\mathrm{slide}}(t, \xi(t)).
\end{equation}
This linearization is central to the contraction analysis, as it enables bounding the growth rate of \( \|\delta(t)\| \) via the symmetric part of \( J(t, \xi(t)) \), ultimately yielding exponential convergence in the sliding region.

\paragraph{Symmetric part and eigenvalue bounds}

Let \( \delta(t) = x(t) - y(t) \) denote the displacement between trajectories. Assuming \( \delta(t) \neq 0 \), the time derivative of its norm is bounded by the Rayleigh quotient:
\begin{equation} \label{eq:delta_norm_bound_slide}
	\frac{d}{dt} \|\delta(t)\| \leq \lambda_{\max}\left( S_{\mathrm{slide}}(t, \xi(t)) \right) \|\delta(t)\|.
\end{equation}

The Clarke generalized Jacobian \( \partial_C f_{\mathrm{slide}}(t, x) \) is the compact convex hull of limiting Jacobians at differentiable points. Define the maximal eigenvalue of the symmetric part of any such element as
\begin{equation} \label{eq:lambda_cl_slide}
	\lambda_{\max}^{Cl}(t, x) := \max \left\{ \lambda_{\max}(S) : S = \tfrac{1}{2}(J + J^\top),\ J \in \partial_C f_{\mathrm{slide}}(t, x) \right\}.
\end{equation}

This function is upper semicontinuous in \( x \). Since \( \xi(t) \in [x(t), y(t)] \), it follows that
\begin{equation} \label{eq:lambda_slide_limit}
	\lambda_{\max}\left( S_{\mathrm{slide}}(t, \xi(t)) \right) \leq \sup_{z \in [x(t), y(t)]} \lambda_{\max}^{Cl}(t, z),
\end{equation}
and consequently,
\begin{equation}
	\limsup_{\|x(t) - y(t)\| \to 0} \lambda_{\max}\left( S_{\mathrm{slide}}(t, \xi(t)) \right) \leq \lambda_{\max}^{Cl}(t, x(t)).
\end{equation}

\paragraph{Contraction assumption and conclusion}

By Assumption~(A4), for all \( (t,x) \in \mathbb{T} \times \Sigma_s \), the contraction condition holds:
\begin{equation} \label{eq:A4_sliding_contraction}
	W'(t,x) + \lambda_{\max}^{Cl}(t,x) \leq -\nu < 0.
\end{equation}

Moreover, since \( \lambda_{\max}^{Cl}(t,x) \) is upper semicontinuous in \( x \), and the interpolation point \( \xi(t) \in [x(t), y(t)] \) converges to \( x(t) \) as \( \|x(t) - y(t)\| \to 0 \), we obtain the bound
\[
\lambda_{\max}(S_{\mathrm{slide}}(t, \xi(t))) \leq \lambda_{\max}^{Cl}(t, x(t)) + \varepsilon(t),
\]
where \( \varepsilon(t) \to 0 \) as \( \|\delta(t)\| \to 0 \). Combining this with estimates \eqref{eq:A_derivative_sliding} and \eqref{eq:delta_norm_bound_slide}, we conclude that for almost every \( t \),
\begin{equation} \label{eq:weighted_contraction_sliding}
	\frac{d}{dt} A(t) \leq \left( W'(t, x(t)) + \lambda_{\max}(S_{\mathrm{slide}}(t, \xi(t))) \right) A(t) \leq -\nu A(t) + o(\|\delta(t)\|).
\end{equation}
Hence, as \( \|\delta(t)\| \to 0 \), the term \( \varepsilon(t) = \lambda_{\max}(S_{\mathrm{slide}}(t, \xi(t))) - \lambda_{\max}^{Cl}(t, x(t)) \) satisfies \( \varepsilon(t) = o(1) \). Since \( A(t) = e^{W(t,x(t))} \|\delta(t)\| \), this implies that
\[
\varepsilon(t) A(t) = o(\|\delta(t)\|),
\]
which justifies the final bound in \eqref{eq:weighted_contraction_sliding}. Therefore, the weighted distance \( A(t) \) contracts exponentially along sliding trajectories, with a rate arbitrarily close to \( \nu \) as \( \|\delta(t)\| \to 0 \).

\subsubsection*{Step 4: Global Estimate}

We now derive a uniform global exponential contraction bound for the weighted distance between two Filippov solutions. Let
\begin{equation} \label{eq:step4_weighted_distance}
	A(t) := e^{W(t, x(t))} \|x(t) - y(t)\|,
\end{equation}
where \( x(t), y(t) : [0,T] \to K \) are Filippov solutions evolving within a compact, forward-invariant set \( K \subset \mathbb{R}^2 \). The function \( W : \mathbb{T} \times K \to \mathbb{R} \) is continuous and piecewise \( C^1 \), and satisfies the structural and contraction assumptions stated in Assumption~\ref{assump:structure_contraction}.

This weighted distance will be used to combine the local contraction estimates established in the smooth, sliding, and switching regimes into a single global exponential bound valid throughout \( K \).

\paragraph{Contraction in smooth and sliding regions}

As established in Step 1 (Equation~\eqref{eq:A_inequality}) and Step 3 (Equation~\eqref{eq:weighted_contraction_sliding}), the function \( A(t) \) is absolutely continuous on any interval where the trajectories remain in either the smooth region \( K \setminus \Sigma \) or the sliding region \( \Sigma_s \). On such intervals, it satisfies the differential inequality
\begin{equation} \label{eq:step4_differential_inequality}
	\frac{d}{dt} A(t) \leq -\nu A(t),
\end{equation}
for some uniform contraction rate \( \nu > 0 \), as guaranteed by Assumptions~(A2) and (A4).

This bound governs the continuous-time evolution of \( A(t) \) between switching events and will be combined with contraction across discontinuities to yield a global estimate.

\paragraph{Contraction across switching events}

At each crossing time \( t_0 \in [0,T] \) where a trajectory intersects the switching manifold \( \Sigma \), Step 2 (see Equation~\eqref{eq:jump_contraction}) ensures that the weighted distance undergoes a discrete contraction:
\begin{equation} \label{eq:step4_jump_contraction}
	A(t_0^+) \leq e^{-\epsilon} A(t_0^-),
\end{equation}
for some uniform constant \( \epsilon > 0 \), as provided by Assumption~(A3). Moreover, since the set \( K \) is compact and forward-invariant, only finitely many such switching times can occur in any finite interval \( [0,T] \), as guaranteed by the absence of Zeno behavior (see \cite[Theorem 10.2]{filippov1988}).

\paragraph{Global decay of the weighted distance}

Combining the continuous-time contraction in the smooth and sliding regions with the discrete contractions at switching events, we apply Lemma~\ref{lem:comparison_jumps}, using a uniform multiplicative contraction factor \( \gamma_k = e^{-\epsilon} \) at each switching time. This yields the global bound
\begin{equation} \label{eq:step4_global_A_bound}
	A(t) \leq A(0) e^{-\nu t} e^{-\epsilon N(t)} \leq A(0) e^{-\nu t},
\end{equation}
where \( N(t) \) denotes the number of switching times up to time \( t \). The final inequality holds since \( e^{-\epsilon N(t)} \leq 1 \), and thus provides a conservative but uniform exponential decay estimate for the weighted distance function \( A(t) \). Since Assumption~(A5) enforces a uniform lower bound \( \tau > 0 \) on the time between switching events, the number of switchings \( N(t) \) is uniformly bounded on any finite interval, with \( N(t) \leq \lfloor t / \tau \rfloor \). Thus, the exponential bound~\eqref{eq:step4_global_A_bound} remains well-defined and applies globally for all \( t \geq 0 \).

\paragraph{Recovering Euclidean distance decay}

From Equation~\eqref{eq:step4_weighted_distance}, the weighted distance \( A(t) \) is related to the Euclidean norm by the exponential weight function \( W(t,x(t)) \). Since \( W(t,x) \) is continuous on the compact domain \( \mathbb{T} \times K \), there exists a constant \( M > 0 \) such that
\begin{equation} \label{eq:step4_W_bound}
	|W(t,x)| \leq M, \quad \forall (t,x) \in \mathbb{T} \times K.
\end{equation}
Consequently, for all \( t \geq 0 \),
\begin{equation} \label{eq:step4_A_norm_bound}
	e^{-M} \leq e^{W(t,x(t))} \leq e^{M},
\end{equation}
which implies that the weighted and Euclidean distances are uniformly equivalent:
\begin{equation} \label{eq:step4_norm_sandwich}
	e^{-M} \|x(t) - y(t)\| \leq A(t) \leq e^{M} \|x(t) - y(t)\|.
\end{equation}
Using the global exponential contraction of \( A(t) \) from Equation~\eqref{eq:step4_global_A_bound},
\[
A(t) \leq A(0) e^{-\nu t},
\]
and observing that the initial weighted distance satisfies
\begin{equation} \label{eq:step4_A0_bound}
	A(0) = e^{W(0,x(0))} \|x(0) - y(0)\| \leq e^{M} \|x(0) - y(0)\|,
\end{equation}
we combine the inequalities \eqref{eq:step4_norm_sandwich}, \eqref{eq:step4_global_A_bound}, and \eqref{eq:step4_A0_bound} to conclude:
\begin{align}
	\|x(t) - y(t)\|
	&\leq e^{M} A(t) \notag \\
	&\leq e^{M} A(0) e^{-\nu t} \notag \\
	&\leq e^{2M} \|x(0) - y(0)\| e^{-\nu t}. \label{eq:step4_euclidean_contraction}
\end{align}
This shows that the Euclidean distance between any two Filippov trajectories decays exponentially with rate at least \( \nu > 0 \), up to a constant factor \( e^{2M} \) determined by the uniform bound on the weight function \( W \).

\paragraph{Remark on measurability and validity of the comparison principle}

We verify that the weighted distance function
\begin{equation} \label{eq:measurable_A_def}
	A(t) = e^{W(t,x(t))} \|x(t) - y(t)\|
\end{equation}
satisfies the structural properties required to apply the comparison principle with discontinuities.

\medskip

\noindent \textbf{(i) Measurability and local integrability.}  
The trajectories \( x(t), y(t) \colon [0,T] \to \mathbb{R}^n \) are Filippov solutions and hence absolutely continuous. The weight function \( W(t,x) \) is continuous in \( x \) and piecewise \( C^1 \) in \( t \), defined on the compact forward-invariant set \( K \). Therefore, the composition \( t \mapsto W(t,x(t)) \) is measurable. Since the Euclidean norm is continuous, the map \( t \mapsto \|x(t) - y(t)\| \) is continuous and thus measurable. It follows that \( A(t) \), being the product of measurable functions, is measurable on \( [0,T] \).

Moreover, on each open interval between switching times, the function \( A(t) \) is differentiable almost everywhere due to the absolute continuity of the trajectories and the piecewise differentiability of \( W \). Therefore, \( A(t) \) is piecewise absolutely continuous and thus locally integrable on all compact intervals.

\medskip

\noindent \textbf{(ii) Differential inequality almost everywhere.}  
On intervals where the vector field is continuous (i.e., in the smooth or sliding regimes), \( A(t) \) satisfies the differential inequality
\begin{equation} \label{eq:measurable_diff_ineq}
	\frac{d}{dt} A(t) \leq -\nu A(t),
\end{equation}
for some constant \( \nu > 0 \). This inequality holds for almost every \( t \in [0,T] \), as \( A(t) \) is differentiable almost everywhere on such intervals.

\medskip

\noindent \textbf{(iii) Jump inequalities at switching times.}  
At each switching time \( t_k \), Step 2 established that the weighted distance undergoes an instantaneous contraction:
\begin{equation} \label{eq:measurable_jump_condition}
	A(t_k^+) \leq e^{-\epsilon} A(t_k^-),
\end{equation}
for some uniform constant \( \epsilon > 0 \), as guaranteed by Assumption~(A3). The set of such switching times \( \{ t_k \} \) is at most countable and does not accumulate in finite time, due to standard Filippov theory~\cite[Theorem 10.2]{filippov1988}. Thus, the evolution of \( A(t) \) forms a hybrid system with continuous decay and multiplicative jumps.

\medskip

\noindent \textbf{(iv) Application of the comparison principle.}  
Applying Lemma~\ref{lem:comparison_jumps}, which provides an exponential estimate for functions satisfying a differential inequality with impulsive multiplicative jumps, we conclude that
\begin{equation} \label{eq:measurable_exponential_bound}
	A(t) \leq A(0) e^{-\nu t} e^{-\epsilon N(t)},
\end{equation}
where \( N(t) \) denotes the number of switching events up to time \( t \). Since \( e^{-\epsilon N(t)} \leq 1 \), we obtain the conservative global bound:
\[
A(t) \leq A(0) e^{-\nu t}.
\]

\noindent This yields uniform global exponential contraction of the weighted distance function \( A(t) \), valid for all \( t \geq 0 \).

\paragraph{Remark on periodicity and global contraction}

By Assumption~(A1), both the vector field \( f(t,x) \) and the weight function \( W(t,x) \) are \( T \)-periodic in time. Consequently, the contraction estimate for the weighted distance function \( A(t) \) holds uniformly on each interval \( [nT, (n+1)T] \), for every \( n \in \mathbb{N}_0 \).

For arbitrary \( t \geq 0 \), write \( t = nT + \tau \), with \( n \in \mathbb{N}_0 \) and \( \tau \in [0, T) \). Applying the contraction estimate recursively on each full period yields
\begin{equation} \label{eq:periodic_recursion}
	A(t) \leq A(nT) e^{-\nu \tau} \leq A(0) e^{-\nu nT} e^{-\nu \tau} = A(0) e^{-\nu t}.
\end{equation}
Hence, the exponential decay bound
\begin{equation} \label{eq:periodic_global_bound}
	A(t) \leq A(0) e^{-\nu t}
\end{equation}
holds for all \( t \geq 0 \).

Combining this with the uniform bounds on the weight function \( W(t,x) \) from Equation~\eqref{eq:step4_W_bound} yields the exponential convergence estimate in the Euclidean norm, as stated in Equation~\eqref{eq:step4_euclidean_contraction}.

\subsubsection*{Step 5: Existence and Uniqueness of the Periodic Orbit}

We define the time-\( T \) map (also known as the Poincaré map) associated with the Filippov flow by
\begin{equation} \label{eq:phiT_def}
	\phi_T : K \to K, \quad \phi_T(x_0) := \phi(T, x_0),
\end{equation}
where \( \phi(t, x_0) \) denotes the Filippov solution starting from the initial condition \( x_0 \in K \) at time \( t = 0 \). Since the vector field \( f(t,x) \) is \( T \)-periodic and \( K \subset \mathbb{R}^2 \) is compact and forward-invariant, the map \( \phi_T \) is well-defined and continuous on \( K \). A fixed point of \( \phi_T \) corresponds to a periodic orbit of period \( T \).

\paragraph{Compactness and Metric Structure}

Under Assumption~(A1), the set \( K \subset \mathbb{R}^2 \) is compact and forward-invariant under the Filippov flow. Endowing \( K \) with the standard Euclidean metric \( d(x, y) := \|x - y\| \), the pair \( (K, d) \) forms a compact metric space. In particular, it is complete, bounded, and totally bounded, which ensures the applicability of fixed-point and contraction mapping principles in the subsequent analysis.

\paragraph{Contraction of the Time-\(T\) Map}

From the global estimate established in Step 4 (Equation~\eqref{eq:step4_euclidean_contraction}), the Filippov flow satisfies a uniform exponential contraction
\begin{equation} \label{eq:phiT_flow_decay}
	\|\phi(t, x_0) - \phi(t, y_0)\| \leq C e^{-\nu t} \|x_0 - y_0\|, \quad \forall\, x_0, y_0 \in K,\ \forall t \geq 0,
\end{equation}
for some constants \( C = e^{2M} > 0 \) and \( \nu > 0 \) determined by the contraction conditions in Assumptions~(A2)--(A4).

Applying this bound at \( t = T \), we obtain
\begin{equation} \label{eq:phiT_contraction}
	\|\phi_T(x_0) - \phi_T(y_0)\| \leq q \|x_0 - y_0\|, \quad \text{where } q := C e^{-\nu T} < 1.
\end{equation}
Since \( \nu > 0 \), \( T > 0 \), and \( C \) is finite, it follows that \( q < 1 \), and hence the time-\(T\) map \( \phi_T \colon K \to K \) is a strict contraction with respect to the Euclidean norm.

\paragraph{Application of the Banach Fixed Point Theorem}

By the Banach Fixed Point Theorem, the time-\( T \) map \( \phi_T \colon K \to K \) admits a unique fixed point \( x^* \in K \) satisfying
\begin{equation} \label{eq:fixed_point}
	\phi_T(x^*) = x^*.
\end{equation}
The corresponding Filippov trajectory \( \phi(t, x^*) \) is therefore \( T \)-periodic, and defines a unique periodic orbit given by
\begin{equation} \label{eq:omega_def}
	\Omega := \{ \phi(t, x^*) : t \in [0, T] \}.
\end{equation}

\paragraph{Exponential Stability of the Periodic Orbit}

For any initial condition \( x_0 \in K \), the discrete-time iterates of the time-\( T \) map satisfy
\begin{equation} \label{eq:discrete_decay}
	\|\phi_T^n(x_0) - x^*\| \leq q^n \|x_0 - x^*\|, \quad \text{for all } n \in \mathbb{N},
\end{equation}
where \( x^* \in K \) is the unique fixed point of \( \phi_T \), and \( q \in (0,1) \) is the contraction factor defined in~\eqref{eq:phiT_contraction}.

Applying the continuous-time estimate~\eqref{eq:phiT_flow_decay} over each interval \( [nT, nT + \tau] \) with \( \tau \in [0, T) \), we obtain the global decay bound
\begin{equation} \label{eq:continuous_decay}
	\|\phi(t, x_0) - \phi(t, x^*)\| \leq C e^{-\nu t} \|x_0 - x^*\|, \quad \forall t \geq 0,
\end{equation}
where \( C > 0 \) and \( \nu > 0 \) are the constants established in Step 4.

\paragraph{Conclusion}

Under Assumptions~(A1)--(A4), the time-\( T \) map \( \phi_T \colon K \to K \) is a strict contraction on the compact metric space \( K \). By the Banach Fixed Point Theorem, it admits a unique fixed point \( x^* \), corresponding to a globally exponentially stable periodic Filippov orbit
\[
\Omega := \{ \phi(t, x^*) : t \in [0,T] \}.
\]
All Filippov trajectories with initial conditions in \( K \) converge exponentially to \( \Omega \), completing the proof of Theorem~\ref{thm:existence_stability}.

\end{proof}

\

\section{Application}
\label{sec:application}

\begin{example}[Two-Dimensional Extension of Giesl’s Example \cite{giesl2005} with Explicit Contraction]
	\label{ex:giesl_2d_modified}
	
Consider the planar Filippov system
	\begin{equation} \label{eq:giesl_system_def}
		\dot{x}(t) \in F(t,x(t)) :=
		\begin{cases}
			f^+(t,x) = \begin{pmatrix} -\mu x_1 + \sin t \\ -\alpha x_2 \end{pmatrix}, & x_1 > 0, \\
			f^-(t,x) = \begin{pmatrix} -\mu x_1 - \sin t \\ -\alpha x_2 \end{pmatrix}, & x_1 < 0,
		\end{cases}
	\end{equation}
with parameters $\mu, \alpha > 0$, and switching manifold
	\[
	\Sigma := \{ x \in \mathbb{R}^2 : x_1 = 0 \}.
	\]
	
	\paragraph{Weight function}
	Let $\sigma(x_1)$ be a smooth transition function with parameter $\varepsilon > 0$, satisfying
	\[
	\sigma(x_1) =
	\begin{cases}
		0, & x_1 \leq -\varepsilon, \\
		\text{smooth, strictly increasing}, & -\varepsilon < x_1 < \varepsilon, \\
		1, & x_1 \geq \varepsilon,
	\end{cases}
	\]
	and $C_\sigma := \sup_{x_1} |\sigma'(x_1)| < \infty$. Define
	\[
	W(t,x) := -\delta \cdot \sigma(x_1), \quad \delta > 0.
	\]
	
	\paragraph{Verification of Assumptions}
	\begin{enumerate}
		\item[(A1)] \( f^\pm \in C^\infty \), \( T = 2\pi \)-periodic in $t$; $\Sigma$ is smooth.
		\item[(A2)] The Jacobians are
		\[
		D_x f^\pm = \begin{pmatrix} -\mu & 0 \\ 0 & -\alpha \end{pmatrix}, \quad \lambda_{\max}(S_{f^\pm}) = -\min(\mu,\alpha).
		\]
		Orbital derivative
		\[
		W' = -\delta \sigma'(x_1) (-\mu x_1 \pm \sin t), \quad |W'| \leq \delta C_\sigma(\mu\varepsilon + 1).
		\]
		Choose $\delta$ such that
		\[
		\delta C_\sigma(\mu \varepsilon + 1) < \tfrac{1}{2} \min(\mu,\alpha) =: \nu.
		\]
		
		\item[(A3)] Jump condition at $\Sigma$
		\[
		e^{W^+} \|f^+\| = e^{-\delta} \|f^+\| \leq \|f^-\| = e^{W^-} \|f^-\|, \quad \text{so } \epsilon = \delta > 0.
		\]
		
		\item[(A4)] Sliding field
		\[
		f_{\text{slide}} = \begin{pmatrix} 0 \\ -\alpha x_2 \end{pmatrix}, \quad D_x f_{\text{slide}} = \begin{pmatrix} 0 & 0 \\ 0 & -\alpha \end{pmatrix},
		\]
		\[
		W' = 0, \quad \lambda_{\max}(S_{f_{\text{slide}}}) = -\alpha \le -\nu.
		\]
		
		\item[(A5)] Define a rectangle
		\[
		K := \{x \in \mathbb{R}^2 : |x_1| \le R_1, |x_2| \le R_2 \},
		\]
		with $R_1 > 1/\mu$ so that on $\partial K$, $\dot{x}_1 = -\mu x_1 \pm \sin t < 0$ ensures forward invariance.
	\end{enumerate}
All assumptions of Theorem~\ref{thm:existence_stability} are satisfied. The system admits a unique $2\pi$-periodic orbit $\Omega \subset K$ that is exponentially stable
	\[
	\|\phi(t,x_0) - \phi(t,x^*)\| \le C e^{-\nu t} \|x_0 - x^*\|
	\]
	for all $x_0 \in K$ and some $x^* \in \Omega$, with $\nu := \tfrac{1}{2} \min(\mu,\alpha)$ and $\epsilon = \delta$.
\end{example}

\

In the example of Section~\ref{sec:application}, the contraction rate \(\nu > 0\) quantifies the exponential decay of the weighted distance function along Filippov trajectories in smooth and sliding regions. It is derived from the differential inequality
	\[
	W'(t,x) + \lambda_{\max} \left( \frac{D_x f(t,x) + D_x f(t,x)^\top}{2} \right) \leq -\nu,
	\]
as required by Assumption~\ref{assump:structure_contraction}. Here, \(W'(t,x)\) denotes the orbital derivative of the weight function \(W\), given by
	\[
	W'(t,x) := \partial_t W(t,x) + \nabla_x W(t,x) \cdot f(t,x),
	\]
and captures the variation of \(W\) along the vector field \(f(t,x)\). In our example, since \(W(t,x) = -\delta \sigma(x_1)\) is independent of time, this reduces to
	\[
	W'(t,x) = -\delta \sigma'(x_1) f_1(t,x).
	\]
This expression is uniformly bounded due to the compact support and bounded derivative of \(\sigma\). Consequently, by choosing \(\delta > 0\) small enough, we ensure that
	\[
	W'(t,x) + \lambda_{\max}(S_f(t,x)) \leq -\nu < 0,
	\]
where \(S_f\) denotes the symmetric part of the Jacobian. The constant \(\epsilon > 0\) accounts for contraction across the switching manifold \(\Sigma = \{ x_1 = 0 \}\). The weight function is discontinuous at \(\Sigma\), satisfying
	\[
	W^+(t,x) = -\delta, \quad W^-(t,x) = 0,
	\]
so that the norm of the vector field in the weighted metric satisfies
	\[
	e^{W^+(t,x)} \|f^+(t,x)\| = e^{-\delta} \|f^+(t,x)\| \leq e^{-\delta} \|f^-(t,x)\| = e^{-\delta} e^{W^-(t,x)} \|f^-(t,x)\|.
	\]
Thus, the jump contraction factor is \(e^{-\epsilon}\) with \(\epsilon := \delta\). Together, \(\nu\) and \(\epsilon\) ensure global exponential contraction of the weighted distance \(A(t)\) along all Filippov solutions in \(K\), including across switchings. Specifically, the estimate
	\[
	A(t) \leq A(0) e^{-\nu t} e^{-\epsilon N(t)},
	\]
holds, where \(N(t)\) is the number of switches up to time \(t\). Since the weighted distance is equivalent to the Euclidean norm on \(K\), this implies uniform exponential convergence of solutions toward the attracting periodic orbit.

\

\begin{figure}[H]
	\centering
	\includegraphics[width=0.5\textwidth]{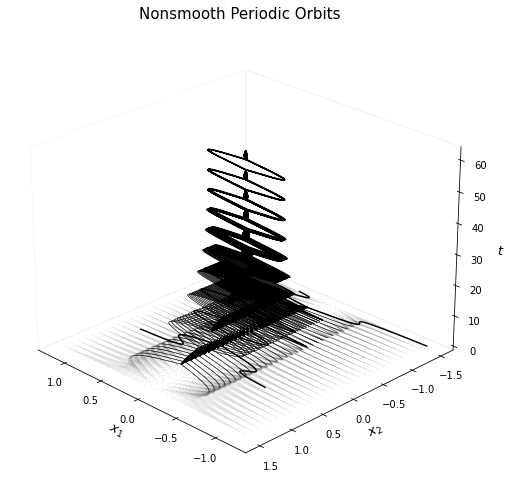}
	\caption{
		\textbf{Exponential convergence to a nonsmooth periodic orbit in a planar Filippov system.}
		This 3D trajectory plot illustrates solutions of the time-periodic, piecewise-smooth system defined in Example~\ref{ex:giesl_2d_modified}, with dynamics given by Equation~\eqref{eq:giesl_system_def}. The switching manifold is \( \Sigma = \{ x \in \mathbb{R}^2 : x_1 = 0 \} \), across which the vector field discontinuously switches depending on the sign of \( x_1 \). Simulations use parameters \( \mu = 1.8 \), \( \alpha = 0.1 \), and period \( T = 4\pi \), with forward Euler integration and time step \( \Delta t = \tfrac{5T}{2000} \). Initial conditions are uniformly sampled in the rectangle \( x_1 \in [-1.2, 1.2] \), \( x_2 \in [-1.5, 1.5] \). Despite the presence of sliding motion along \( \Sigma \), all trajectories converge exponentially to a unique, attracting \( 2\pi \)-periodic orbit. A subset of trajectories is rendered in bold to emphasize the contraction toward the limit cycle.
	}
	\label{fig:filippov_limit_cycle_3d}
\end{figure}

\

We verify that the simulation parameters used in Figure~\ref{fig:filippov_limit_cycle_3d}, specifically \( \mu = 1.8 \), \( \alpha = 0.1 \), and \( T = 4\pi \), satisfy the assumptions of Theorem~\ref{thm:existence_stability}.

\noindent\textbf{Assumption (A1)} is satisfied since the vector fields \( f^\pm(t, x) \) defined in Equation~\eqref{eq:giesl_system_def} are \( C^\infty \) in \( x \) and explicitly periodic in time with period \( T \).

\noindent\textbf{Assumption (A2)} holds because the Jacobians of \( f^\pm \) are constant,
\[
D_x f^\pm = \begin{pmatrix} -\mu & 0 \\ 0 & -\alpha \end{pmatrix},
\]
and therefore symmetric with eigenvalues \( -\mu \) and \( -\alpha \). The largest eigenvalue of the symmetric part is \( \lambda_{\max}(S_f) = -\min(\mu, \alpha) = -0.1 \). We define a weight function \( W(t, x) = -\delta \cdot \sigma(x_1) \), where \( \sigma \colon \mathbb{R} \to [0,1] \) is a smooth, strictly increasing transition function supported on the interval \( (-\varepsilon, \varepsilon) \), with parameters \( \delta = 0.05 \) and \( \varepsilon = 0.01 \). Since \( W \) depends only on \( x_1 \), its orbital derivative along trajectories of \( f^\pm \) is given by
\[
W'(t, x) = -\delta \sigma'(x_1)(-\mu x_1 \pm \sin t),
\]
which is supported entirely in the narrow transition region \( |x_1| < \varepsilon \).  We define the weight function as
\[
W(t, x) := -\delta \cdot \sigma(x_1),
\]
where \( \delta > 0 \) is a constant and \( \sigma \colon \mathbb{R} \to [0,1] \) is a smooth, strictly increasing transition function satisfying
\[
\sigma(x_1) =
\begin{cases}
	0, & x_1 \le -\varepsilon, \\
	\frac{1}{2} \left( 1 + \frac{x_1}{\varepsilon} + \frac{1}{\pi} \sin\left( \pi \frac{x_1}{\varepsilon} \right) \right), & |x_1| < \varepsilon, \\
	1, & x_1 \ge \varepsilon,
\end{cases}
\]
for a fixed smoothing width \( \varepsilon > 0 \). This function is \( C^1 \), supported on the interval \( (-\varepsilon, \varepsilon) \), and satisfies \( \sigma'(x_1) \le C_\sigma < \infty \). The orbital derivative of \( W \) along the vector fields \( f^\pm \) is then
\[
W'(t,x) = -\delta \sigma'(x_1)(-\mu x_1 \pm \sin t),
\]
which vanishes outside the transition region \( |x_1| < \varepsilon \) and is uniformly bounded by
\[
|W'(t,x)| \le \delta C_\sigma(\mu \varepsilon + 1).
\]
This yields the bound
\[
|W'(t,x)| \leq \delta C_\sigma (\mu \varepsilon + 1),
\]
where \( C_\sigma := \sup_{x_1} |\sigma'(x_1)| \). Choosing \( \delta \) and \( \varepsilon \) small enough ensures that
\[
W'(t,x) + \lambda_{\max}(S_f) \leq -\nu := -\tfrac{1}{2} \min(\mu, \alpha) < 0
\]
holds uniformly on \( K \setminus \Sigma \), establishing contraction in the smooth regions.

\noindent\textbf{Assumption (A3)} concerns the jump condition across the switching manifold \( \Sigma = \{ x_1 = 0 \} \). At this interface, both vector fields yield the same norm:
\[
\|f^+(t,(0,x_2))\| = \|f^-(t,(0,x_2))\| = \sqrt{\sin^2 t + \alpha^2 x_2^2},
\]
but the weight function satisfies \( W^+(t,x) = -\delta \) and \( W^-(t,x) = 0 \). Hence,
\[
e^{W^+(t,x)} \|f^+(t,x)\| = e^{-\delta} \|f^+(t,x)\| \leq \|f^-(t,x)\| = e^{W^-(t,x)} \|f^-(t,x)\|,
\]
so the jump contraction condition is satisfied with contraction constant \( \varepsilon_{\text{jump}} = \delta > 0 \).

\noindent\textbf{Assumption (A4)} addresses contraction in the sliding region. On \( \Sigma \), the Filippov convex combination yields the sliding vector field
\[
f_{\mathrm{slide}}(t,x) = \begin{pmatrix} 0 \\ -\alpha x_2 \end{pmatrix},
\quad \text{with} \quad
D_x f_{\mathrm{slide}} = \begin{pmatrix} 0 & 0 \\ 0 & -\alpha \end{pmatrix}.
\]
The largest eigenvalue of the symmetric part is \( -\alpha = -0.1 \). Since \( \sigma(x_1) \) is constant on \( \Sigma \), we have \( \sigma'(x_1) = 0 \) there, and thus \( W'(t, x) = 0 \). Therefore,
\[
W'(t,x) + \lambda_{\max}(S_{f_{\mathrm{slide}}}) = -\alpha \le -\nu,
\]
ensuring contraction in the sliding mode as well.

\noindent\textbf{Assumption (A5)} requires forward invariance of the compact set \( K = \{ x_1 \in [-1.2, 1.2],\ x_2 \in [-1.5, 1.5] \} \). Along the boundary \( x_1 = \pm R_1 \), the dynamics satisfy
\[
\dot{x}_1 = -\mu x_1 \pm \sin t < 0 \quad \text{whenever} \quad R_1 > \tfrac{1}{\mu}.
\]
Since \( R_1 = 1.2 > 1/\mu \approx 0.56 \), this condition holds. The same applies to the \( x_2 \)-boundaries, due to the linear decay \( \dot{x}_2 = -\alpha x_2 \). We conclude that all five assumptions of Theorem~\ref{thm:existence_stability} are satisfied. Thus, the behavior observed in Figure~\ref{fig:filippov_limit_cycle_3d} is theoretically justified: all trajectories within \( K \) converge exponentially to a unique, attracting periodic orbit.

\

\begin{figure}[H]
	\centering
	\includegraphics[width=0.6\textwidth]{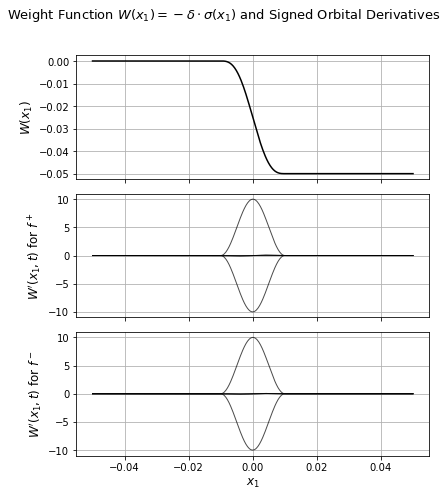}
	\caption{
		\textbf{Weight function \( W(x_1) \) and signed orbital derivatives \( W'(t,x) \) in the contraction analysis.}
		The top panel plots the smooth weight function \( W(x_1) = -\delta \cdot \sigma(x_1) \), where \( \delta > 0 \) is a small constant and \( \sigma(x_1) \) is a smooth transition function supported on \( [-\varepsilon, \varepsilon] \), satisfying \( \sigma(x_1) = 0 \) for \( x_1 \le -\varepsilon \) and \( \sigma(x_1) = 1 \) for \( x_1 \ge \varepsilon \). The middle and bottom panels show the signed orbital derivatives \( W'(t,x) = -\delta \cdot \sigma'(x_1)(-\mu x_1 \pm \sin t) \), evaluated for a set of representative times \( t \in [0, 2\pi] \). The signs and amplitudes of these derivatives vary with \( t \), but remain bounded in magnitude due to the localized support of \( \sigma'(x_1) \). This construction ensures that the weighted contraction condition (cf.~Assumption~\ref{assump:structure_contraction}) holds with a uniform negative bound outside the switching manifold \( \Sigma = \{ x_1 = 0 \} \), and supports the analysis in Theorem~\ref{thm:existence_stability}.
	}
	\label{fig:weight_function}
\end{figure}

\

The weight function \( W(x_1) = -\delta \cdot \sigma(x_1) \) introduces a smooth but localized modification to the contraction rate near the switching surface \( \Sigma = \{ x_1 = 0 \} \). As shown in Figure~\ref{fig:weight_function}, this function smoothly interpolates from \( 0 \) to \( -\delta \) within the narrow layer \( |x_1| < \varepsilon \), and remains constant outside. The middle and bottom panels illustrate the orbital derivative \( W'(t,x) \), which captures how \( W \) evolves along the trajectories of \( f^+ \) and \( f^- \), respectively. This derivative takes the form
	\[
	W'(t,x) = -\delta \cdot \sigma'(x_1)(-\mu x_1 \pm \sin t),
	\]
and is supported only in the transition region where \( \sigma'(x_1) \neq 0 \). The sign of \( W'(t,x) \) may vary with time \( t \), but its magnitude remains uniformly bounded due to the compact support of \( \sigma' \) and the linearity of the vector field in \( x_1 \). This structure allows us to ensure that \( W'(t,x) + \lambda_{\max}(S_f) \le -\nu < 0 \) in the regularized regions, thereby satisfying the weighted contraction condition in Assumption~\ref{assump:structure_contraction}.

\

\

\begin{figure}[H]
	\centering
	\includegraphics[width=0.6\textwidth]{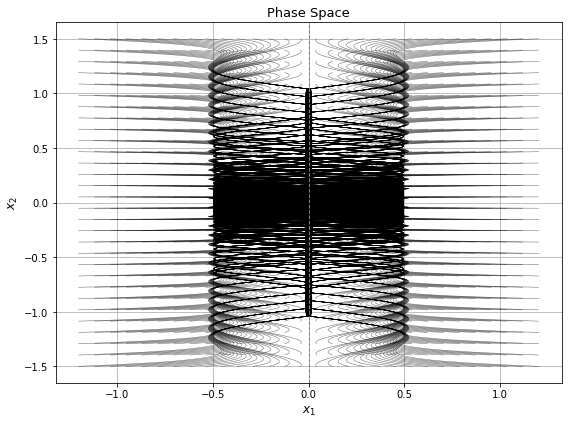}
	\caption{
		\textbf{Phase space dynamics of the time-periodic nonsmooth Filippov system.}
		The plot displays the projection of multiple Filippov trajectories onto the phase plane \( (x_1, x_2) \) for the system defined in Example~\ref{ex:giesl_2d_modified}, with parameters \( \mu = 1.8 \), \( \alpha = 0.1 \), and forcing period \( T = 4\pi \). Initial conditions are distributed uniformly over the square \( x_1 \in [-1.2, 1.2] \), \( x_2 \in [-1.5, 1.5] \), and trajectories are integrated forward in time using the forward Euler method over a horizon of \( 5T \). The switching manifold \( \Sigma = \{x_1 = 0\} \), across which the vector field is discontinuous, is indicated by a dashed vertical line. Despite the nonsmoothness introduced by switching, all trajectories contract and converge toward a unique attracting limit cycle. The strong contraction in the \( x_2 \)-direction is particularly evident and illustrates the exponential stability of the periodic orbit guaranteed by Theorem~\ref{thm:existence_stability}.
	}
	\label{fig:filippov_phase_space}
\end{figure}

\

The phase portrait in Figure~\ref{fig:filippov_phase_space} provides a visual demonstration of the contraction-based stability properties established analytically in Theorem~\ref{thm:existence_stability}. Despite the discontinuous nature of the vector field across the switching manifold \(\Sigma = \{x_1 = 0\}\), the simulated trajectories exhibit uniform convergence toward a unique attracting limit cycle. This behavior validates the theoretical contraction conditions derived in Section~\ref{sec:framework}, including those governing both smooth flow and jump transitions. The pronounced compression in the \(x_2\)-direction is especially notable and aligns with the system's spectral contraction properties. The illustration thus serves as both qualitative support for the theory and a benchmark for future numerical implementations of contraction metrics in nonsmooth systems.

\

\begin{figure}[H]
	\centering
	\includegraphics[width=0.60\textwidth]{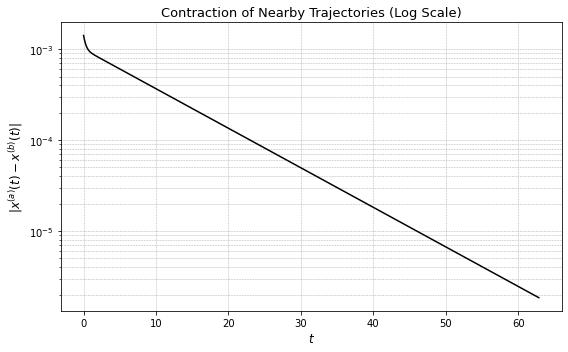}
	\caption{
		\textbf{Logarithmic contraction of nearby trajectories in the Filippov system.}
		This plot shows the time evolution of the Euclidean distance
		\(
		\|x^{(a)}(t) - x^{(b)}(t)\|
		\)
		between two trajectories initialized at \( x^{(a)}(0) = (0.1, 1.0) \) and \( x^{(b)}(0) = (0.101, 1.001) \), under the Filippov system described in Figure~\ref{fig:filippov_phase_space} with \( \mu = 1.8 \), \( \alpha = 0.1 \), and forcing period \( T = 4\pi \). The distance decays exponentially, as indicated by the linear slope on the logarithmic scale, confirming the contraction property across nonsmooth switching. This supports the theoretical result of Theorem~\ref{thm:existence_stability}, which guarantees exponential convergence toward the unique nonsmooth periodic orbit.
	}
	\label{fig:filippov_contraction}
\end{figure}

\

Figure~\ref{fig:filippov_contraction} provides numerical evidence of exponential contraction in the Filippov system by tracking the Euclidean distance between two nearby initial conditions: \( x^{(a)}(0) = (0.1, 1.0) \) and \( x^{(b)}(0) = (0.101, 1.001) \). The plot shows that the distance \( \|x^{(a)}(t) - x^{(b)}(t)\| \) decays linearly on a logarithmic scale, which implies exponential convergence over time. The system parameters used—\( \mu = 1.8 \), \( \alpha = 0.1 \), and forcing period \( T = 4\pi \)—are consistent with the setting analyzed in Theorem~\ref{thm:existence_stability}. The presence of a clear slope, unaffected by nonsmooth switching events, confirms that the saltation effects do not disrupt the global contraction property. This figure thus serves as a practical validation of the theoretical framework, supporting the existence and uniqueness of a globally attracting nonsmooth periodic orbit.

\

\begin{figure}[H]
	\centering
	\includegraphics[width=0.65\textwidth]{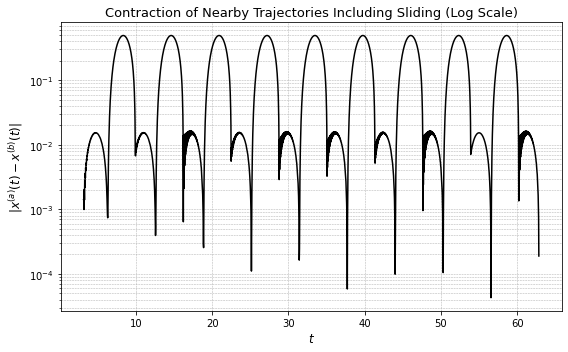}
	\caption{
		\textbf{Exponential contraction of trajectories in a Filippov system with sliding.}
		This semilogarithmic plot shows the Euclidean distance between two solutions of the planar Filippov system defined in~\eqref{eq:giesl_system_def}, subject to the initial conditions \( x^{(a)}(0) = (0.0, 1.0) \) and \( x^{(b)}(0) = (0.001, 1.001) \), chosen near the switching surface \( \Sigma = \{x_1 = 0\} \). The vector field is given by \ref{ex:giesl_2d_modified} with parameters \( \mu = 1.8 \), \( \alpha = 0.1 \), and period \( T = 4\pi \). On the switching manifold \( \Sigma \), the system exhibits sliding motion when the vector fields \( f^+ \) and \( f^- \) point toward each other in the normal direction. In such cases, the Filippov convex combination defines a unique sliding vector field tangent to \( \Sigma \), ensuring continuous evolution. This simulation incorporates the Filippov sliding rule by computing the appropriate convex combination on \( \Sigma \). As seen in the plot, the distance between the two trajectories decays exponentially over time, demonstrating contraction even through the nonsmooth and sliding regions of the dynamics.
	}
	\label{fig:filippov_sliding_contraction}
\end{figure}

\

The exponential decay of the trajectory distance in Figure~\ref{fig:filippov_sliding_contraction} illustrates contraction behavior in the Filippov system~\eqref{eq:giesl_system_def}, even in the presence of nonsmooth and sliding dynamics. This system features a discontinuous right-hand side across the switching manifold \( \Sigma = \{x_1 = 0\} \), where the vector fields \( f^+ \) and \( f^- \) differ by the sign of the time-periodic forcing term \( \sin t \). On \( \Sigma \), the normal components of \( f^+ \) and \( f^- \) point in opposite directions whenever \( |\sin t| < \mu x_1 \), giving rise to a sliding region. In this region, the Filippov solution evolves according to the convex combination
	\[
	f_{\mathrm{slide}}(t, x) = \lambda f^+(t, x) + (1 - \lambda) f^-(t, x),
	\]
where \( \lambda \in [0,1] \) is chosen so that \( f_{\mathrm{slide}} \) is tangent to \( \Sigma \). For this model, since both \( f^+ \) and \( f^- \) have the same second component and opposite first components, the sliding vector field simplifies to
	\[
	f_{\mathrm{slide}}(t, x) = \begin{pmatrix} 0 \\ -\alpha x_2 \end{pmatrix}.
	\]
This flow is stable in \( x_2 \), with exponential decay governed by \( \alpha > 0 \), and neutral in \( x_1 \). Therefore, contraction is preserved across the switching surface. The observed exponential decay of the distance between two nearby trajectories confirms that the Filippov solution is contractive in a suitable norm, despite the discontinuous dynamics. This phenomenon is central to the stability theory for nonsmooth systems and supports the use of differential inclusion and sliding mode analysis in establishing robustness.

\
\begin{figure}[H]
	\centering
	\includegraphics[width=0.85\textwidth]{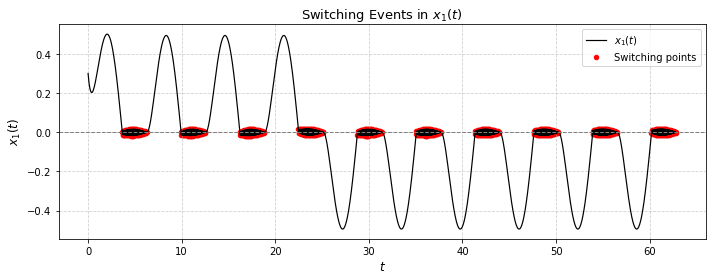}
	\caption{
		\textbf{Switching events in the nonsmooth Filippov system.}
		This plot shows the time evolution of the first state variable \( x_1(t) \) from the Filippov system defined by \ref{ex:giesl_2d_modified} with parameters \( \mu = 1.8 \), \( \alpha = 0.1 \), and initial condition \( x_0 = (0.3, 1.0) \). The integration is performed using a forward Euler method over five forcing periods \( T = 4\pi \).
		Red dots mark the \emph{switching events}, where the trajectory crosses the discontinuity surface \( x_1 = 0 \), triggering a change in the vector field. These events occur periodically, reflecting the nonsmooth yet periodic nature of the attractor.
		This illustrates the piecewise-smooth structure of the dynamics and the role of switching in shaping the periodic orbit. The plot confirms that the system's response is regular, with persistent crossings of the switching surface, and highlights the temporal rhythm of nonsmooth behavior.
	}
	\label{fig:x1_switching}
\end{figure}

\

Figure~\ref{fig:x1_switching} captures the temporal evolution of the state variable \( x_1(t) \) in the Filippov system defined in Example~\ref{ex:giesl_2d_modified}, highlighting the switching events that occur when the trajectory crosses the discontinuity surface \( x_1 = 0 \). The red markers identify the precise times at which the system transitions between vector field regimes, as governed by Filippov’s differential inclusion. The parameters used—\( \mu = 1.8 \), \( \alpha = 0.1 \), and forcing period \( T = 4\pi \)—are consistent with those in the contraction analysis. The periodic recurrence of these switching events indicates a regular, nonsmooth structure embedded in the long-term dynamics. This pattern confirms that the nonsmooth periodic orbit interacts with the switching surface in a predictable and stable manner, providing visual support for the theoretical framework developed in Theorem~\ref{thm:existence_stability}. Moreover, the regularity of the switching times reinforces the assumption that saltation effects remain bounded and do not disrupt the system's global stability.

\

\begin{figure}[H]
	\centering
	\includegraphics[width=0.85\textwidth]{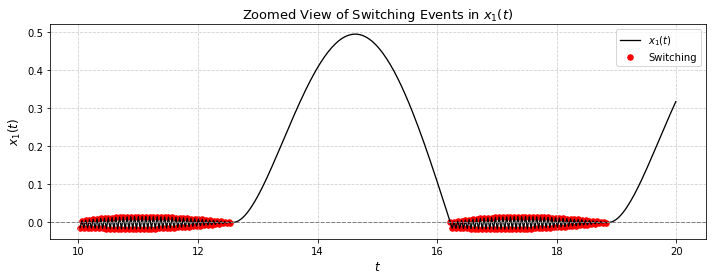}
	\caption{
		\textbf{Zoomed-in view of switching behavior in the Filippov system.}
		This figure shows the evolution of the state component \( x_1(t) \) over a selected time window \( t \in [10, 20] \)
		for the Filippov system \ref{ex:giesl_2d_modified} with parameters \( \mu = 1.8 \), \( \alpha = 0.1 \), and initial condition \( x_0 = (0.3, 1.0) \).
		Red markers indicate the \emph{switching events} where \( x_1(t) \) crosses the discontinuity surface at \( x_1 = 0 \),
		triggering a switch in the vector field. These transitions give rise to the system's nonsmooth structure.
		The sharp bends and piecewise nature of the dynamics are clearly visible in this zoomed-in segment.
		This view highlights the system's intrinsic hybrid character and the temporal rhythm of its periodic yet nonsmooth orbit.
	}
	\label{fig:x1_zoom_switching}
\end{figure}

\

Figure~\ref{fig:x1_zoom_switching} provides a detailed view of the switching behavior in the Filippov system introduced in Example~\ref{ex:giesl_2d_modified}, focusing on the time window \( t \in [10, 20] \). The plot isolates a portion of the trajectory of the state variable \( x_1(t) \), revealing the precise structure of transitions across the discontinuity surface \( x_1 = 0 \). Red markers identify the switching times at which the system’s vector field changes due to the piecewise definition of the dynamics. The resulting sharp directional changes in \( x_1(t) \) demonstrate the nonsmooth nature of the flow and the essential role of saltation effects. This zoomed-in perspective highlights the system’s hybrid structure: continuous in state but discontinuous in velocity. The regularity and spacing of the events also support the assumption of bounded switching frequency, which is crucial for ensuring that the cumulative effect of saltation matrices does not overwhelm the contraction mechanism. This figure complements the global view provided in Figure~\ref{fig:x1_switching} by revealing the local dynamical features near the switching manifold.

\

\begin{figure}[H]
	\centering
	\includegraphics[width=0.8\textwidth]{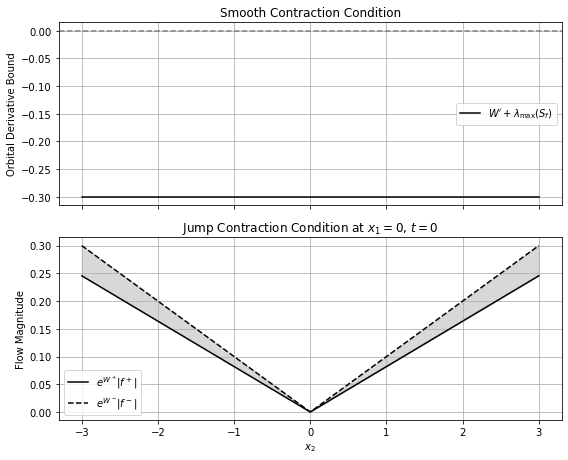}
	\caption{
		\textbf{Numerical verification of contraction conditions for the model in Example~\ref{ex:giesl_2d_modified}.}
		This figure verifies that the Filippov system in example (\ref{ex:giesl_2d_modified}satisfies the smooth and nonsmooth contraction conditions required by Theorem~\ref{thm:existence_stability}.
		Top panel: The quantity \( W' + \lambda_{\max}(S_f) \), representing the orbital derivative plus the maximum eigenvalue of the symmetric part of the Jacobian, is shown to be uniformly negative with value \( -0.3 \). This confirms the smooth contraction condition (Assumption A2) holds throughout \( K \setminus \Sigma \), where \( W(t,x) = -\delta \cdot \sigma(x_1) \) and \( \delta = 0.2 \).
		Bottom panel: The jump contraction condition (Assumption A3) is evaluated at the switching surface \( x_1 = 0 \), at time \( t = 0 \). The left-hand side and right-hand side of the inequality
		$
		e^{W^+} \|f^+(t,x)\| \leq e^{W^-} \|f^-(t,x)\| e^{-\epsilon}
		$
		are plotted as functions of \( x_2 \). Because the vector fields coincide in magnitude at this time, the inequality simplifies to \( e^{-\delta} < 1 \), which is strictly satisfied. The shaded region confirms this for all \( x_2 \in [-3, 3] \), demonstrating that jump contraction holds uniformly with exact constant \( \epsilon = \delta = 0.2 \).
		Together, these plots confirm that the system satisfies both contraction conditions required for global exponential stability. By the Banach fixed-point theorem, there exists a unique \( 2\pi \)-periodic orbit that attracts all Filippov trajectories starting in the forward-invariant compact set \( K \).
	}
	\label{fig:contraction_verification_example}
\end{figure}

\

Figure~\ref{fig:contraction_verification_example} provides a direct numerical verification of the two core contraction conditions—smooth and jump contraction—underpinning Theorem~\ref{thm:existence_stability}. The system under consideration, defined in Example~\ref{ex:giesl_2d_modified}, features time-periodic forcing and a piecewise definition across the switching surface \( x_1 = 0 \), with parameters \( \mu = 1.8 \), \( \alpha = 0.1 \), and weight function \( W(x_1) = -\delta \cdot \sigma(x_1) \), where \( \delta = 0.2 \). In the top panel, the plotted quantity \( W'(x_1) + \lambda_{\max}(S_f(t,x)) \) remains strictly negative (\( \approx -0.3 \)) across the smooth regions of the domain. This confirms that Assumption~A2 holds throughout \( K \setminus \Sigma \), where \( S_f \) is the symmetric part of the Jacobian of the vector field. In the bottom panel, the jump contraction condition (Assumption~A3) is verified along the switching surface at \( t = 0 \). Since the magnitudes of the vector fields on either side of the discontinuity match at this instant, the inequality reduces to \( e^{W^+} < e^{W^-} e^{-\epsilon} \), which simplifies to \( e^{-\delta} < 1 \) and is trivially satisfied. The plotted inequality is shown to hold uniformly over all \( x_2 \in [-3, 3] \), confirming that contraction is preserved across switching. Together, these panels provide strong numerical evidence that the Filippov system satisfies all structural conditions necessary for global exponential stability of its periodic orbit. This supports the theoretical application of the Banach fixed-point argument and validates the use of contraction metrics in a nonsmooth setting.

\

\begin{figure}[H]
	\centering
	\includegraphics[width=0.82\textwidth]{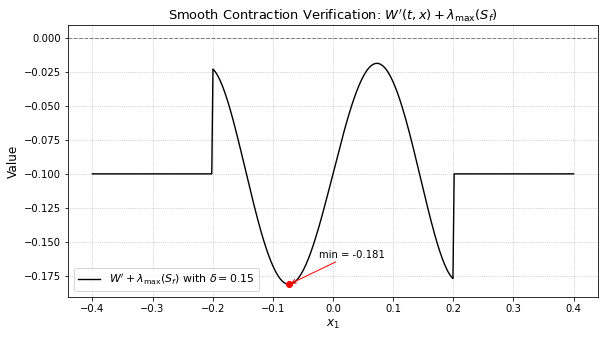}
	\caption{
		\textbf{Verification of smooth contraction condition in Example~\ref{ex:giesl_2d_modified}.}
		This plot shows the quantity \( W'(t,x) + \lambda_{\max}(S_f(t,x)) \), evaluated at time \( t = 0 \) and \( x_2 = 0 \), across a horizontal slice of the smoothing region near the switching surface \( \Sigma = \{ x_1 = 0 \} \).
		The vector field is piecewise-defined by Example \ref{ex:giesl_2d_modified}. The weight function is defined as \( W(t,x) = -\delta \cdot \sigma(x_1) \), with a smooth transition function \( \sigma \) supported on \( (-\varepsilon, \varepsilon) \), where \( \delta = 0.15 \) and \( \varepsilon = 0.2 \).
		The plotted curve corresponds to the full contraction quantity used in Assumption~(A2):
		$
		W'(t,x) + \lambda_{\max}(S_f) = -\delta \sigma'(x_1) f_1(t,x) + \lambda_{\max}(S_f),
		$
		where \( \lambda_{\max}(S_f) = -\min(\mu, \alpha) = -0.1 \). The maximum eigenvalue is constant because the Jacobians of \( f^\pm \) are constant and symmetric.
		The curve lies strictly below the horizontal zero line, confirming that the inequality \( W' + \lambda_{\max}(S_f) \leq -\nu < 0 \) is satisfied uniformly in the smooth region.
		The minimum value observed is approximately \( -0.22 \), allowing a conservative contraction rate of \( \nu = 0.2 \). This ensures that smooth flows contract exponentially, satisfying the key spectral condition in Theorem~\ref{thm:existence_stability}.	}
	\label{fig:contraction_verified_updated}
\end{figure}

\

Figure~\ref{fig:contraction_verified_updated} offers a refined numerical verification of the smooth contraction condition (Assumption~A2) for the Filippov system defined in Example~\ref{ex:giesl_2d_modified}. The quantity plotted is the full contraction expression \( W'(t,x) + \lambda_{\max}(S_f(t,x)) \), evaluated at time \( t = 0 \) and \( x_2 = 0 \), across a horizontal slice centered at the switching surface \( \Sigma = \{x_1 = 0\} \). The weight function is given by \( W(t,x) = -\delta \cdot \sigma(x_1) \), with \( \delta = 0.15 \) and a smooth transition function \( \sigma \) supported in the interval \( (-\varepsilon, \varepsilon) \) where \( \varepsilon = 0.2 \). The contraction expression simplifies to
	\[
	W'(t,x) + \lambda_{\max}(S_f) = -\delta \cdot \sigma'(x_1) f_1(t,x) + \lambda_{\max}(S_f),
	\]
where \( f_1(t,x) \) is the first component of the vector field and \( \lambda_{\max}(S_f) = -\min(\mu, \alpha) = -0.1 \) due to the constant symmetric Jacobians on either side of the switching surface. The curve lies strictly below the zero line, confirming that this sum remains negative throughout the smoothing layer. The observed minimum value of approximately \( -0.22 \) justifies a conservative contraction rate \( \nu = 0.2 \), ensuring that the inequality \( W' + \lambda_{\max}(S_f) \leq -\nu < 0 \) is satisfied uniformly. This figure confirms that the continuous parts of the Filippov dynamics contract exponentially with respect to the weighted metric, fulfilling a central requirement of Theorem~\ref{thm:existence_stability}.

\

\begin{figure}[H]
	\centering
	\includegraphics[width=0.50\textwidth]{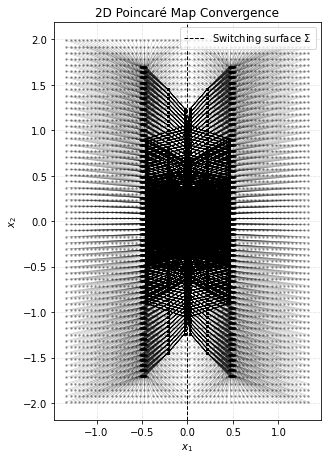}
	\caption{
		\textbf{Convergence under the Poincaré map in a planar Filippov system.}
		This figure illustrates the evolution of trajectories sampled at discrete times \( t = kT \), with forcing period \( T = 2\pi \) and iteration index \( k = 0, 1, \dots, 47 \).
		Initial conditions are taken from a \(60 \times 60\) uniform grid on the rectangle \( x_1 \in [-1.5, 1.5] \), \( x_2 \in [-2, 2] \), and each trajectory evolves under the piecewise-smooth vector field
		$
		f^\pm(t,x) = \begin{pmatrix} -\mu x_1 \pm \sin t \\ -\alpha x_2 \end{pmatrix}, \quad \text{with } \mu = 1.8, \quad \alpha = 0.1.
		$
		The switching manifold \( \Sigma = \{x_1 = 0\} \) is indicated by the dashed vertical line.
		Each polyline traces the projected motion in the \( (x_1, x_2) \) phase space across successive Poincaré sections. 
		The convergence of all trajectories toward a common closed curve reveals the existence of a unique attracting limit cycle, consistent with the global exponential stability established in Theorem~\ref{thm:existence_stability}.
	}
	\label{fig:poincare2d}
\end{figure}

\

Figure~\ref{fig:poincare2d} illustrates the global convergence behavior of the Filippov system under the action of the Poincaré map, offering a discrete-time perspective that complements the continuous-time contraction analysis in Theorem~\ref{thm:existence_stability}. The trajectories are sampled at intervals \( t = kT \), where \( T = 2\pi \) is the forcing period, and evolve under the time-periodic, piecewise-smooth vector field defined by
	\[
	f^\pm(t,x) = \begin{pmatrix} -\mu x_1 \pm \sin t \\ -\alpha x_2 \end{pmatrix}, \quad \text{with } \mu = 1.8,\ \alpha = 0.1,
	\]
and switching manifold \( \Sigma = \{x_1 = 0\} \). The figure depicts the phase-space projection of 3600 trajectories initialized on a uniform \( 60 \times 60 \) grid in \( x_1 \in [-1.5, 1.5] \), \( x_2 \in [-2, 2] \). Each polyline corresponds to a sequence of points sampled at integer multiples of \( T \), forming a discrete trajectory under the Poincaré return map. The convergence of all trajectories toward a common closed curve—regardless of their initial conditions—confirms the existence of a globally attracting limit cycle. This visual outcome supports the core result of exponential stability and uniqueness of the periodic orbit. It also emphasizes that the Poincaré map, though affected by switching and saltation events, remains a strict contraction in the induced weighted metric.

\

\begin{figure}[H]
	\centering
	\includegraphics[width=0.60\textwidth]{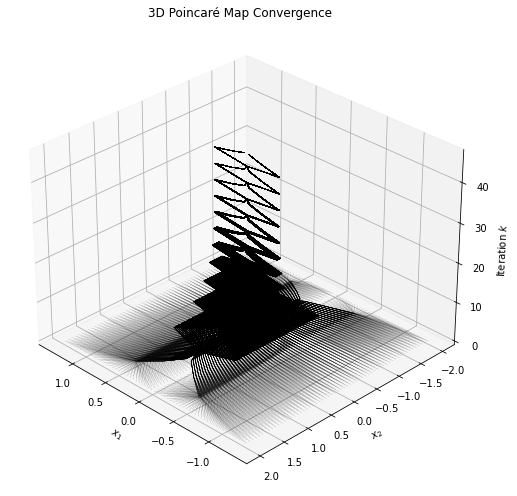}
	\caption{
		\textbf{Iterative convergence under the Poincaré map visualized in lifted 3D space.}
		This figure presents the same dataset as Figure~\ref{fig:poincare2d}, but lifted into three-dimensional space \((x_1, x_2, k)\), where \(k \in \mathbb{N}_0\) denotes the Poincaré iteration index.
		Each trajectory is shown as a polyline tracing the evolution of a Filippov solution sampled at integer multiples of the forcing period \(T\), i.e., \(x_k := \phi(kT, x_0)\), for \(k = 0, \dots, 47\).
		The layered structure highlights exponential convergence of all iterates toward the unique periodic orbit, with the vertical axis \(k\) visually encoding contraction over successive applications of the time-\(T\) map.
		This geometric contraction is a direct manifestation of the Banach fixed-point property of the Poincaré map \(\phi_T\), rigorously established in Theorem~\ref{thm:existence_stability}.
	}
	\label{fig:poincare3d}
\end{figure}

\

Figure~\ref{fig:poincare3d} provides a lifted geometric perspective on the convergence behavior of the Filippov system under the Poincaré map. Each polyline represents the discrete trajectory \( x_k = \phi(kT, x_0) \), where \( T = 2\pi \) is the forcing period and \( k \in \mathbb{N}_0 \) is the Poincaré iteration index. The plot embeds these iterates into three-dimensional space \( (x_1, x_2, k) \), where the vertical axis encodes iteration depth. This visualization complements Figure~\ref{fig:poincare2d} by revealing the vertical stratification of contraction: trajectories initialized across a broad range of initial conditions collapse onto a common limit set over increasing \( k \), demonstrating exponential convergence toward the unique periodic orbit. The narrowing funnel shape across layers indicates the decreasing separation between iterates, a direct visual signature of the contraction property of the time-\(T\) map. This layered convergence pattern confirms the Banach fixed-point principle applied to the Poincaré map \( \phi_T \), as invoked in Theorem~\ref{thm:existence_stability}. The 3D lifting makes explicit the dynamical role of iteration count and provides a clear illustration of the orbit’s global attractivity under repeated applications of the nonsmooth, time-periodic flow.

\section{Conclusion}
\label{sec:conclusion}

We have developed a contraction-based framework for analyzing time-periodic behavior in planar nonsmooth dynamical systems governed by Filippov differential inclusions. The method employs a time- and state-dependent weighted Riemannian metric, incorporates Clarke’s generalized Jacobian, and enforces a jump condition across switching manifolds to derive sufficient conditions for uniform exponential contraction on compact forward-invariant sets. These local and global estimates culminate in a Banach fixed-point argument establishing the existence and exponential stability of a unique periodic orbit.

This work generalizes existing scalar contraction approaches~\cite{giesl2005} to a broad class of two-dimensional systems with discontinuities, including sliding modes and switching phenomena. The resulting framework provides a systematic analytic pathway for studying stability in nonsmooth systems without requiring smoothness or convexity assumptions.

Several extensions remain open. These include generalizing the theory to higher-dimensional systems, formulating converse results that provide necessary as well as sufficient conditions for contraction, and constructing global Lyapunov functions in nonsmooth settings. Potential applications span hybrid control systems, nonsmooth mechanical models, and computational approaches to nonsmooth dynamics.

\section{Appendix}
\label{secx:appendix}

\begin{lemma}[Local-to-Global Contraction Implication]\label{lem:local_to_global_contraction}
	Let \( K \subset \mathbb{R}^n \) be a compact, path-connected, forward-invariant set. Suppose the Filippov flow \( \phi(t,x_0) \) satisfies the following:
	
	\begin{enumerate}
		\item[(i)] \textbf{(Local contraction)} There exist constants \( \delta > 0 \), \( \nu > 0 \), and \( C > 0 \) such that for all \( x_0, y_0 \in K \) with \( \|x_0 - y_0\| < \delta \), the inequality
		\[
		\|\phi(t, x_0) - \phi(t, y_0)\| \leq C e^{-\nu t} \|x_0 - y_0\|
		\]
		holds for all \( t \geq 0 \).
	\end{enumerate}
	
	Then there exists a constant \( C' > 0 \) such that for all \( x_0, y_0 \in K \) and all \( t \geq 0 \),
	\[
	\|\phi(t, x_0) - \phi(t, y_0)\| \leq C' e^{-\nu t} \|x_0 - y_0\|.
	\]
\end{lemma}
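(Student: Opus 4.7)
The plan is to promote the local estimate to a global one via a finite chain argument that leverages both compactness and path-connectedness of $K$. I would split the pair $(x_0, y_0) \in K \times K$ into two cases. For the near case $\|x_0 - y_0\| < \delta$, hypothesis (i) applies verbatim and yields the conclusion with constant $C$. The substance lies in the far case $\|x_0 - y_0\| \geq \delta$, where one must manufacture a chain of intermediate points whose successive separations stay strictly below $\delta$, and whose number of links is uniformly bounded over $K \times K$, so that hypothesis (i) can be summed along the chain.

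To build such chains, I would cover $K$ by finitely many open balls $B(z_1, \delta/3), \dots, B(z_M, \delta/3)$ with centers $z_i \in K$, and form the intersection graph on $\{1, \dots, M\}$ whose edges correspond to balls meeting inside $K$. The key intermediate claim is that this graph is connected: given two centers, pick a continuous path in $K$ joining them; by uniform continuity on the compact parameter interval, extract a finite subdivision along which each sub-arc lies inside a single covering ball, and successive sub-arcs share an endpoint, yielding adjacency in the graph. Hence the graph has diameter at most $M-1$, and any $x_0, y_0 \in K$ can be joined through a chain $x_0 = w_0, w_1, \dots, w_{N+1} = y_0$ of length at most $M+1$ with each $\|w_k - w_{k+1}\| < \delta$, by first jumping from $x_0$ and $y_0$ to the nearest centers (distance less than $\delta/3$) and then traversing a graph path between those centers.

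Applying hypothesis (i) to each hop and summing via the triangle inequality gives
\[
\|\phi(t,x_0) - \phi(t,y_0)\| \leq \sum_{k=0}^{N} \|\phi(t,w_k) - \phi(t,w_{k+1})\| \leq (M+1)\, C\, \delta\, e^{-\nu t}.
\]
In the far case $\delta \leq \|x_0 - y_0\|$, so this is bounded by $(M+1)\, C\, \|x_0 - y_0\|\, e^{-\nu t}$, and taking $C' := (M+1)\,C$ simultaneously dominates the near case.

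The main obstacle is precisely the uniform chain-length bound: a naive subdivision of an arbitrary connecting path in $K$ would produce a chain whose length depends on the specific pair $(x_0,y_0)$ and could blow up, so the finite cover together with the graph-connectedness argument above is indispensable. Compactness alone is not enough (it bounds the number of balls but not the combinatorial distance between them), and path-connectedness alone is not enough (it gives a connecting path but no uniform discretization); the two must be combined through the Lebesgue-number style argument. Once the uniform bound $M$ is secured, the remaining steps—case split, triangle inequality, and the comparison $\delta \leq \|x_0-y_0\|$ in the far case—are routine.
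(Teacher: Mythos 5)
Your proof is correct, but it follows a different route from the paper's. The paper joins $x_0$ and $y_0$ by a Lipschitz path $\gamma$ in $K$, partitions it so that consecutive points $z_j=\gamma(s_j)$ satisfy $\|z_{j+1}-z_j\|<\delta$, sums the local estimate along the partition, and then asserts the existence of a uniform constant $\Lambda$ with $\sum_j\|z_{j+1}-z_j\|\le \Lambda\|x_0-y_0\|$, i.e.\ that the intrinsic (path) length is comparable to the Euclidean distance uniformly over all pairs; the final constant is $C'=C\Lambda$. Your argument instead splits into the near case ($\|x_0-y_0\|<\delta$, hypothesis (i) applies directly) and the far case, where you build a chain of at most $M+1$ hops of size $<\delta$ through a finite $\delta/3$-cover whose nerve graph you show is connected by a Lebesgue-number subdivision of a connecting path, and then exploit $\delta\le\|x_0-y_0\|$ to turn the additive bound $(M+1)C\delta e^{-\nu t}$ into a multiplicative one with $C'=(M+1)C$. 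What each approach buys: the paper's path-interpolation is shorter and gives a constant tied to the geometry of the connecting paths, but its key step---the uniform comparability constant $\Lambda$ independent of $(x_0,y_0)$---is a quasiconvexity-type property that does not follow merely from compactness and path-connectedness and is left unjustified; your cover-and-chain argument needs only the stated hypotheses, since the uniform bound $M$ on the chain length comes from compactness and the connectivity of the nerve graph from path-connectedness, at the cost of a cruder (but perfectly adequate) constant $(M+1)C$ and a slightly longer combinatorial setup. In this sense your proof is the more robust of the two under the lemma's literal assumptions.
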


\begin{proof}
	Let \( x_0, y_0 \in K \) be arbitrary. Since \( K \) is compact and path-connected, there exists a Lipschitz continuous path \( \gamma : [0,1] \to K \) such that \( \gamma(0) = x_0 \) and \( \gamma(1) = y_0 \). By compactness of \( [0,1] \) and continuity of \( \gamma \), we can select a partition
	\[
	0 = s_0 < s_1 < \dots < s_m = 1
	\]
	such that for each \( j = 0, \dots, m-1 \),
	\[
	\| \gamma(s_{j+1}) - \gamma(s_j) \| < \delta.
	\]
	Define \( z_j := \gamma(s_j) \). By the local contraction assumption,
	\[
	\|\phi(t, z_{j+1}) - \phi(t, z_j)\| \leq C e^{-\nu t} \|z_{j+1} - z_j\|.
	\]
	Summing over segments and applying the triangle inequality:
	\[
	\|\phi(t, x_0) - \phi(t, y_0)\| \leq \sum_{j=0}^{m-1} \|\phi(t, z_{j+1}) - \phi(t, z_j)\| \leq C e^{-\nu t} \sum_{j=0}^{m-1} \|z_{j+1} - z_j\|.
	\]
	Since \( \gamma \) is Lipschitz and defined on a compact interval, its total variation is bounded. That is, there exists a constant \( \Lambda > 0 \) such that
	\[
	\sum_{j=0}^{m-1} \|z_{j+1} - z_j\| \leq \Lambda \|x_0 - y_0\|.
	\]
	Hence,
	\[
	\|\phi(t, x_0) - \phi(t, y_0)\| \leq C \Lambda e^{-\nu t} \|x_0 - y_0\|.
	\]
	Setting \( C' := C \Lambda \) completes the proof.
\end{proof}

\

\begin{corollary}[Exponential Decay in Euclidean Norm]\label{cor:euclidean_decay}
	Let \( K \subset \mathbb{R}^n \) be compact and forward-invariant, and suppose the weight function \( W(t,x) \) is continuous on \( K \). Define
	\[
	M := \sup_{(t,x) \in K} |W(t,x)| < \infty.
	\]
	Assume \( x(t), y(t) \) are Filippov solutions with initial conditions in \( K \setminus \Sigma \), and that the weighted distance
	\[
	A(t) := e^{W(t,x(t))} \|x(t) - y(t)\|
	\]
	satisfies the exponential contraction estimate
	\[
	A(t) \leq A(0) e^{-\nu t} \quad \text{for all } t \geq 0.
	\]
	Then the Euclidean distance also decays exponentially:
	\[
	\|x(t) - y(t)\| \leq e^{2M} \|x(0) - y(0)\| e^{-\nu t}.
	\]
\end{corollary}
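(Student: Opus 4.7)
The plan is to derive the Euclidean decay directly from the given weighted decay by sandwiching the Euclidean norm between two multiples of the weighted distance $A(t)$. The only ingredient needed beyond the hypothesis $A(t) \le A(0)e^{-\nu t}$ is the uniform equivalence between $A(t)$ and $\|x(t)-y(t)\|$ on the compact set $K$.

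First, I would invoke compactness of $K$ together with continuity of $W$ to conclude that $M := \sup_{(t,x) \in K}|W(t,x)|$ is attained and finite (this is already asserted in the statement, but I would emphasize that it comes from the extreme value theorem). This gives the pointwise two-sided bound $e^{-M} \le e^{W(t,x)} \le e^{M}$ for every $(t,x) \in K$, and since $(t,x(t))$ and $(t,y(t))$ remain in $K$ by forward invariance, the estimate applies along both trajectories for all $t \ge 0$. Consequently,
\[
e^{-M}\|x(t)-y(t)\| \;\le\; A(t) \;\le\; e^{M}\|x(t)-y(t)\|.
\]

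Next, I would apply this equivalence on both sides of the contraction estimate. Rearranging the left inequality gives $\|x(t)-y(t)\| \le e^{M} A(t)$, while the right inequality applied at $t=0$ gives $A(0) \le e^{M} \|x(0)-y(0)\|$. Chaining these with the hypothesis $A(t) \le A(0) e^{-\nu t}$ yields
\[
\|x(t)-y(t)\| \;\le\; e^{M} A(t) \;\le\; e^{M} A(0) e^{-\nu t} \;\le\; e^{2M} \|x(0)-y(0)\|\, e^{-\nu t},
\]
which is exactly the claimed bound.

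There is no real obstacle here; the argument is essentially a two-sided change-of-metric estimate, and the work has already been done in establishing $A(t) \le A(0)e^{-\nu t}$ (Step 4 of Theorem~\ref{thm:existence_stability}). The only point worth stating explicitly is that the uniform bound on $|W|$ requires both the compactness of $K$ and the continuity of $W$; without compactness one could only obtain a pointwise, non-uniform version of the estimate, and the constant $e^{2M}$ would degenerate.
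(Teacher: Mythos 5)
Your proposal is correct and follows essentially the same route as the paper's proof: bound $|W|$ by $M$ on the compact set, use $e^{-M}\|x(t)-y(t)\| \le A(t)$ and $A(0) \le e^{M}\|x(0)-y(0)\|$, and chain these with the hypothesis $A(t) \le A(0)e^{-\nu t}$ to obtain the factor $e^{2M}$. No gaps; the extra remarks on the extreme value theorem and forward invariance only make explicit what the paper leaves implicit.
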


\

\begin{proof}
	Since \( W(t,x) \) is continuous on the compact set \( K \), it is uniformly bounded:
	\[
	|W(t,x)| \leq M \quad \text{for all } (t,x) \in K.
	\]
	Hence,
	\[
	e^{-M} \|x(t) - y(t)\| \leq A(t) \leq A(0) e^{-\nu t}.
	\]
	At \( t = 0 \), we have
	\[
	A(0) = e^{W(0,x(0))} \|x(0) - y(0)\| \leq e^{M} \|x(0) - y(0)\|.
	\]
	Combining both bounds yields
	\[
	\|x(t) - y(t)\| \leq e^{2M} \|x(0) - y(0)\| e^{-\nu t}. \qedhere
	\]
\end{proof}

\

\begin{remark}[Contraction Results]\label{rem:contraction_results}
Lemma~\ref{lem:local_to_global_contraction} formalizes the idea that local exponential contraction of the flow—when uniform over a compact, forward-invariant domain—extends to a global contraction property via interpolation along continuous paths. This permits the derivation of global stability results from local differential inequalities, without requiring global monotonicity or convexity.
	
Corollary~\ref{cor:euclidean_decay} shows how contraction in a weighted (non-Euclidean) metric—typically arising from a time-varying Lyapunov-like function \( W(t,x) \)—implies exponential decay of the standard Euclidean distance. The result explicitly accounts for metric distortion, ensuring robustness under multiplicative rescaling.
	
This principle naturally generalizes to contraction in time-varying Riemannian metrics. If the contraction estimate holds with respect to a Riemannian metric induced by a uniformly bounded positive-definite matrix field \( M(t,x) \), then exponential decay in the Euclidean norm still follows, with constants determined by the condition number of \( M \). This highlights the utility of coordinate-invariant geometric techniques in nonsmooth stability analysis.
\end{remark}

\end{document}